\renewenvironment{proof}[1][\proofname] {\par\pushQED{\qed}\normalfont\topsep6\p@\@plus6\p@\relax\trivlist\item[\hskip\labelsep\bfseries#1\@addpunct{.}]\ignorespaces}{\popQED\endtrivlist\@endpefalse}
\newcommand{\ex}{\operatorname{ex}}
\newcommand{\RR}{\mathbb{R}}
\newcommand{\PP}{\mathbb{P}}
\newcommand{\ZZ}{\mathbb{Z}}
\newcommand{\thth}{\textsuperscript{th} }
\newcommand*\diff{\mathop{}\!\mathrm{d}}
\newtheorem{proposition}{Proposition}[section]
\newtheorem{lemma}[proposition]{Lemma}
\newtheorem{theorem}[proposition]{Theorem}
\newtheorem{question}[proposition]{Question}
\theoremstyle{definition}
\newtheorem*{remark*}{Remark}
\newtheorem*{theorem*}{Theorem}
\newtheorem*{claim*}{Claim}
\def\so{\mathrm{SO}}
\title{Generalizations of the Ruzsa--Szemerédi and rainbow Turán problems for cliques
}
\author{W. T. Gowers \and Barnabás Janzer
}
\date{\vspace{-21pt}}
\begin{document}
	\maketitle
	
\begin{abstract}
Considering a natural generalization of the Ruzsa--Szemerédi problem, we prove that for any fixed positive integers $r,s$ with $r<s$, there are graphs on $n$ vertices containing $n^{r}e^{-O(\sqrt{\log{n}})}=n^{r-o(1)}$ copies of $K_s$ such that any $K_r$ is contained in at most one $K_s$. We also give bounds for the generalized rainbow Turán problem $\ex(n,H,\textnormal{rainbow-}F)$ when $F$ is complete. In particular, we answer a question of Gerbner, Mészáros, Methuku and Palmer, showing that there are properly edge-coloured graphs on $n$ vertices with $n^{r-1-o(1)}$ copies of $K_r$ such that no $K_r$ is rainbow.
\end{abstract}\vspace{-5pt}	

\section{Introduction}\label{sec.intrKrKsrainbow}

The famous Ruzsa--Szemerédi or $(6,3)$-problem 
is to determine how many edges there can be in a 3-uniform hypergraph on $n$ vertices if no six vertices span three or more edges. This rather specific-sounding problem turns out to have several equivalent formulations and bounds in both directions have had many applications. 
It is not difficult to prove an upper bound of $O(n^2)$: one first observes that if two edges have two vertices in common, then neither of them can intersect any other edges, and after removing all such pairs of edges one is left with a linear hypergraph, for which the bound is trivial. Brown, Erdős and Sós \cite{sos1973existence} gave a construction achieving $\Omega(n^{3/2})$ edges and asked whether the maximum is $o(n^2)$. 

The argument sketched in the previous paragraph shows that this question is equivalent to asking whether a graph on $n$ vertices such that no edge is contained in more than one triangle 
must contain $o(n^2)$ triangles. A positive answer to this question was given by Ruzsa and Szemerédi \cite{ruzsa1978triple}, who obtained a bound of $O(n^2/\log_*n)$ with the help of Szemerédi's regularity lemma. They also gave a construction showing that the number of triangles can be as large as $n^2e^{-O(\sqrt{\log n})}=n^{2-o(1)}$, so the exponent in their upper bound cannot be improved. 

One of the applications they gave of their upper bound was an alternative proof of Roth's theorem. Indeed, let $A$ be a subset of $\{1,\dots,N\}$ that contains no arithmetic progression of length $3$. Define a tripartite graph $G$ with vertex classes $X=\{1,2,\dots,N\}$, $Y=\{1,2,\dots,2N\}$ and $Z=\{1,2,\dots,3N\}$, where if $x\in X$, $y\in Y$ and $z\in Z$, then $xy$ is an edge if and only if $y-x\in A$, $yz$ is an edge if and only if $z-y\in A$ and $xz$ is an edge if and only if $(z-x)/2\in A$. Note that these are the edges of the triangles with vertices belonging to triples of the form $(x,x+a,x+2a)$ with $x\in X$ and $a\in A$. If $xyz$ is a triangle in this graph, then $a=y-x, b=z-y, c=(z-x)/2$ satisfy $a,b,c\in A$ and $a+b=2c$, which gives us an arithmetic progression of length $3$ in $A$ unless $y-x=z-y$. Thus, the only triangles are the `degenerate' ones of the form $(x,x+a,x+2a)$, which implies that each edge is contained in at most one triangle. Therefore, the number of triangles is $o(n^2)$ (where $n=6N$). We also have that for each $a\in A$ there are $N$ triangles of the form $(x,x+a,x+2a)$, so $|A|=o(N)$. 

As Ruzsa and Szemerédi also observed, this argument can be turned round: it tells us that if $A$ has density $\alpha$, then there is a graph with $6N$ vertices and $\alpha N^2$ triangles such that each edge is contained in at most one triangle. Since Behrend proved \cite{behrend1946sets} that there exists a subset $A$ of $\{1,\dots,N\}$ of size $Ne^{-O(\sqrt{\log{N}})}$ that does not contain an arithmetic progression of length $3$, this gives the lower bound mentioned above.

Several related questions have been studied, as well as applications and generalizations of the Ruzsa--Szemerédi problem: see for example \cite{alon2006extremal,alon2012nearly}. A natural generalization that we believe has not been considered is the following generalized Turán problem.

\begin{question}\label{question_RuzsaSzemeredigen}
	Let $r$ and $s$ be positive integers with $1\leq r<s$. Let $G$ be a graph on $n$ vertices such that any of its subgraphs isomorphic to $K_r$ is contained in at most one subgraph isomorphic to $K_s$. What is the largest number of copies of $K_s$ that $G$ can contain?
\end{question}

The Ruzsa--Szemerédi problem is the case $r=2, s=3$ of Question \ref{question_RuzsaSzemeredigen}, and the answer is trivially $\Theta(n)$ if $r=1$. One can easily deduce from the graph removal lemma an upper bound of $o(n^r)$ when $r\geq 2$. In the case $r=2$, the construction for the lower bound can be generalized (for example, by using $h$-sum-free sets from \cite{alon2006characterization}) to get a lower bound of $n^2e^{-O(\sqrt{\log{n}})}$. However, there is no obvious way of generalizing the algebraic construction for $r\geq 3$. We shall present a geometric construction instead, in order to prove the following result, which is the first of the two main results of this paper.

\begin{theorem}\label{theorem_KrKs}
	For each $1\leq r<s$ and positive integer $n$ there is a graph on $n$ vertices with $n^{r}e^{-O(\sqrt{\log{n}})}=n^{r-o(1)}$ copies of $K_s$ such that every $K_r$ is contained in at most one $K_s$.
\end{theorem}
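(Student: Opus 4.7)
My plan is to construct an $s$-partite graph whose $K_s$-copies form an $r$-parameter family of ``polynomial progressions'' with step coefficients in a Behrend-type set on a sphere, and then to use Vandermonde interpolation to force the ``at most one $K_s$ per $K_r$'' property. Fix a large $N$, let $d$ grow like $\sqrt{\log N}$, and take $A^*\subset\{0,1,\dots,L\}^d$ to be a Behrend set of lattice points on a sphere $\{x\in\mathbb Z^d:|x|^2=R\}$ obtained via the standard pigeonhole on the sum of squares, giving $|A^*|\ge L^{d-2}/d$. Its key feature is strict sphere convexity: any convex combination of points of $A^*$ that itself lies on the sphere is trivial, i.e.\ all the combined points coincide.

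Take each $V_i\subset\mathbb Z^d$ to be a lattice box of side $\Theta_{r,s}(L)$ large enough to contain all of the cliques produced below, and declare $v\in V_i$ and $w\in V_j$ adjacent iff
\[
w-v=\sum_{k=1}^{r-1}\bigl(f_k(j)-f_k(i)\bigr)\,a_k
\]
for some $a_1,\dots,a_{r-1}\in A^*$, where $\{f_k\}_{k=1}^{r-1}$ is a fixed integer-valued polynomial basis of $\{p\in\mathbb Q[x]:\deg p\le r-1,\,p(0)=0\}$, say $f_k(i)=\binom{i-1}{k}$. The intended $K_s$-cliques are the polynomial progressions $v_i=v_0+\sum_k f_k(i)a_k$ with $v_0\in V_1$ (suitably restricted) and $a_k\in A^*$; counting gives at least $|V_1|\cdot|A^*|^{r-1}\ge n^re^{-O(\sqrt{\log n})}$ of them after setting $n=\Theta(sL^d)$ and tuning $d$ to optimise the Behrend factor.

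The main step, and the main obstacle, is the rigidity claim that every $K_s$-subgraph of this graph is one of the intended polynomial progressions. Given a putative $K_s$-copy $(v_1,\dots,v_s)$, each pair $(i,j)$ provides an $(A^*)^{r-1}$-witness $(\tilde a_k^{(ij)})_k$ for the difference $v_j-v_i$, and the cocycle identity $(v_l-v_i)=(v_l-v_j)+(v_j-v_i)$ translates for every triple $i<j<l$ to
\[
\sum_k(f_k(l)-f_k(i))\,\tilde a_k^{(il)}=\sum_k(f_k(j)-f_k(i))\,\tilde a_k^{(ij)}+\sum_k(f_k(l)-f_k(j))\,\tilde a_k^{(jl)}
\]
in $\mathbb Z^d$. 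For $r=2$ this is a single strict convex combination on the sphere and immediately forces $\tilde a^{(ij)}=\tilde a^{(il)}=\tilde a^{(jl)}$, recovering the classical Ruzsa--Szemer\'edi argument. For $r\ge 3$ the sums couple the $r-1$ witnesses, a single cocycle is not enough, and I expect the correct route is to use all $\binom{s}{3}$ triples jointly together with norm identities coming from $|\tilde a_k^{(ij)}|^2=R$ (or alternatively to place the $r-1$ copies of $A^*$ in essentially orthogonal coordinate subspaces, optimising the trade-off between dimensional overhead and Behrend density) until the witnesses across pairs collapse to a common tuple. This is where the main work of the proof will lie. Once rigidity is established, Vandermonde interpolation applied to any $r$ vertices of a polynomial progression uniquely recovers $(v_0,a_1,\dots,a_{r-1})$, so no $K_r$ of the graph lies in two distinct $K_s$-copies and the proof concludes.
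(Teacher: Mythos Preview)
Your approach is genuinely different from the paper's: you attempt an arithmetic generalisation of the Ruzsa--Szemer\'edi/Behrend construction via polynomial progressions, whereas the paper abandons the arithmetic route and uses a random geometric construction on $S^d$. The paper's authors in fact remark that ``there is no obvious way of generalizing the algebraic construction for $r\ge 3$'', and your proposal does not get past this obstacle.

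The gap is exactly where you place it: rigidity for $r\ge 3$. In your main (non-orthogonal) construction the edge between $V_i$ and $V_j$ records only the single vector $\sum_k(f_k(j)-f_k(i))a_k$, and once $d$ is moderately large such sums have many representations with $a_k\in A^*$ (e.g.\ $|A^*|^2\gg L^d$ forces collisions in $\{2a_1+a_2\}$). The cocycle identities are genuine constraints, but unlike the $r=2$ case none of them is a convex combination of sphere points, and ``norm identities coming from $|\tilde a_k^{(ij)}|^2=R$'' give one scalar equation per witness while the unknowns are $d$-dimensional vectors; there is no evident mechanism that forces collapse, and you do not supply one. Your fallback of placing the $r-1$ Behrend sets in orthogonal coordinate blocks \emph{does} make rigidity provable (each block reduces to the classical strict-convexity argument), but it destroys the count: the ambient dimension becomes $(r-1)d$, so $n\approx L^{(r-1)d}$, while the number of cliques is $|V_1|\cdot|A^*|^{r-1}\approx L^{(r-1)(2d-2)}$, which is only $n^{2-o(1)}$ for every $r\ge 3$. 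The exponent deficit $(r-2)(r-1)d$ is linear in $d$, so no tuning of $d$ against $L$ repairs it.

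For comparison, the paper sidesteps rigidity altogether. It fixes reference points $p_1,\dots,p_s$ in general position on $S^{r-1}$, samples random points on $S^d$, and joins two points in classes $a,b$ when their inner product is within $c$ of $\langle p_a,p_b\rangle$. Because any $r$ of the $p_i$ span $\RR^r$, linear algebra forces the remaining $s-r$ vertices of any $K_s$ into balls of radius $O(\sqrt c)$ about determined points; deleting pairs of nearby vertices in the same class then yields the $K_r$-uniqueness directly, with no combinatorial rigidity argument needed. A probabilistic lemma (Lemma~\ref{lemma_probabilityKs}) handles the count.
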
\medskip

We shall also use a modification of our construction to answer a question about rainbow colourings. Given an edge-colouring of a graph $G$, we say that a subgraph $H$ is \textit{rainbow} if all of its edges have different colours. We denote by $\ex^*(n,H)$ the maximal number of edges that a graph on $n$ vertices can contain if it can be properly edge-coloured (that is, no two edges of the same colour meet at a vertex) in such a way that it contains no rainbow copy of $H$. The rainbow Turán problem (i.e., the problem of estimating $\ex^*(n,H)$) was introduced by Keevash, Mubayi, Sudakov and Verstra\"ete \cite{keevash2007rainbow}, and was studied for several different families of graphs $H$, such as complete bipartite graphs \cite{keevash2007rainbow}, even cycles \cite{keevash2007rainbow,das2013rainbow} and paths \cite{johnston2016rainbow,ergemlidze2019rainbow}. Gerbner, Mészáros, Methuku and Palmer \cite{gerbner2019generalized} considered the following generalized rainbow Turán problem (analogous to a generalization of the usual Turán problem introduced by Alon and Shikhelman \cite{alon2016many}). Given two graphs $H$ and $F$, let $\ex(n,H,\textnormal{rainbow-}F)$ denote the maximal number of copies of $H$ that a properly edge-coloured graph on $n$ vertices can contain if it has no rainbow copy of $F$. Note that $\ex^*(n,H)$ is the special case $\ex(n,K_2,\textnormal{rainbow-}H)$. The authors of \cite{gerbner2019generalized} focused on the case $H=F$ and obtained several results, for example when $H$ is a path, cycle or a tree, and also gave some general bounds. One of their concluding questions was the following.

\begin{question}[Gerbner, Mészáros, Methuku and Palmer \cite{gerbner2019generalized}]\label{question_rbKr}
	What is the order of magnitude of $\ex(n,K_r,\textnormal{rainbow-}K_r)$ for $r\geq 4$?
\end{question}

{For fixed $r$,} a straightforward double-counting argument shows that if $H$ has $r$ vertices, then $\ex(n,H,\textnormal{rainbow-}H)=O(n^{r-1})$. 
{Indeed, if  $G$ is a graph with $n$ vertices that contains no rainbow copy of $H$, then every copy of $H$ contains two edges of the same colour. But the number of such pairs of edges is at most $\binom n2\frac{n-2}2=O(n^3)$, since there are at most $\frac{n-2}2$ edges with the same colour as any given edge, and each such pair can be extended to at most $r!n^{r-4}$ copies of $H$.}

The authors above improved this bound to $o(n^{r-1})$, and gave an example that shows that $\ex(n,K_r,\textnormal{rainbow-}K_r)=\Omega(n^{r-2})$. They also asked whether there is a graph $H$ for which the exponent $r-1$ in the upper bound is sharp. Our next result shows that $H=K_r$ is such a graph.

\begin{theorem}\label{theorem_rainbow}
	For each $r\geq 4$ we have $\ex(n,K_r,\textnormal{rainbow-}K_r)=n^{r-1-o(1)}$.
\end{theorem}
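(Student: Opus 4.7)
The upper bound $\ex(n,K_r,\textnormal{rainbow-}K_r)=O(n^{r-1})$ is the easy double-count sketched in the excerpt (and is improved to $o(n^{r-1})$ in \cite{gerbner2019generalized}), so the entire content of Theorem~\ref{theorem_rainbow} is the matching lower bound.

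The plan is to feed the graph produced by Theorem~\ref{theorem_KrKs} (applied with parameters $(r-1,r)$ in place of $(r,s)$) into a colouring construction. This yields a graph $G$ on $n$ vertices with $n^{r-1-o(1)}$ copies of $K_r$ such that every $K_{r-1}$ lies in at most one $K_r$; equivalently, any two distinct $K_r$'s in $G$ share at most $r-2$ vertices. What remains is to equip $G$ with a proper edge-colouring under which no $K_r$ is rainbow.

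Because the colouring must be proper, any two edges of the same colour inside a given $K_r$ must be vertex-disjoint, and this is exactly why the hypothesis $r\geq 4$ is needed. For each $K_r$-copy $T$ I would designate a canonical pair of disjoint edges $\{e_T,f_T\}\subseteq E(T)$, and then try to arrange $c(e_T)=c(f_T)$. Formally, define the auxiliary graph $\mathcal{A}$ on vertex set $E(G)$ whose edges are exactly the pairs $\{e_T,f_T\}$ as $T$ ranges over $K_r$-copies of $G$, and colour two edges of $G$ identically precisely when they lie in the same connected component of $\mathcal{A}$. This prescription automatically kills every rainbow $K_r$; the only question is whether each component of $\mathcal{A}$ is a matching in $G$, i.e.\ whether the resulting colouring is proper.

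The main obstacle is propagation: a walk in $\mathcal{A}$ corresponds to a chain $T_1,T_2,\dots$ of $K_r$-copies with $f_{T_i}=e_{T_{i+1}}$, and one must prevent any such chain from ever producing two edges of $G$ that share a vertex. I do not see how to control this while treating Theorem~\ref{theorem_KrKs} as a black box, so I expect the proof actually opens up the construction underlying Theorem~\ref{theorem_KrKs}. The likely scenario is that the $K_r$-copies there arise as symmetric images of a single reference configuration and hence carry a canonical involution $\sigma_T$ on their edges; choosing $f_T=\sigma_T(e_T)$ should make the colour classes correspond to a global ``direction''-type invariant of edges of $G$. The $K_{r-1}$-uniqueness property would then enter when verifying properness: since distinct $K_r$'s overlap in at most $r-2$ vertices, chains in $\mathcal{A}$ cannot meander far enough to glue two edges that share a vertex of $G$, and the resulting colouring is simultaneously proper and rainbow-$K_r$-free.
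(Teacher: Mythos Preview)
Your instincts are good and your final paragraph essentially lands on the paper's idea, but the detour through the auxiliary graph $\mathcal{A}$ is both unnecessary and exactly where your argument stalls. The paper never tries to glue colours along chains of $K_r$'s; instead it defines the colour of an edge \emph{directly} by a geometric invariant, and then verifies properness and the no-rainbow property separately.

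Concretely, for $r=4$ think of the reference configuration as a regular tetrahedron centred at the origin; the colour of an edge $xy$ is (a discretized version of) the line through the origin and the midpoint $\tfrac12(x+y)$. To get an honest finite palette one fixes a maximal $3c_1$-separated set $q_1,\dots,q_L$ on $S^d$ (Lemma~\ref{lemma_directions}) and colours $xy$ by the index $l$ of the nearest $q_l$, deleting the edge if no $q_l$ is within $c_1$. The two verifications are then short. \emph{Properness:} if $xy$ and $xy'$ receive the same colour then $\tfrac12(x+y)$ and $\tfrac12(x+y')$ are both $c_1$-close to the same $q_l$, so $\|y-y'\|$ is small; the vertex-separation deletion step (the same one used in the proof of Theorem~\ref{theorem_KrKs}) then forces $y=y'$. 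Note it is this separation, not the $K_{r-1}$-uniqueness per se, that does the work here. \emph{No rainbow $K_r$:} in any $K_4$ of the construction the four vertices form a near-regular tetrahedron, so opposite edges have midpoints at distance $O(\sqrt c)\ll 3c_1$, hence they snap to the same $q_l$.

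The paper actually wraps this up as the more general Theorem~\ref{theorem_rbgeneral} (where the ``midpoint'' is replaced by an arbitrary non-degenerate linear combination $\lambda_v x_v+\lambda_w x_w$ prescribed by a drawing of $H$), and Theorem~\ref{theorem_rainbow} is then a two-line corollary: take $H=K_r$ with edges $12$ and $34$ sharing a colour, pick $p_2,\dots,p_r$ linearly independent in $\RR^{r-1}$, and set $p_1=p_2+p_3+p_4$ so that $p_1-p_2=p_3+p_4$ is the common $z_\kappa$. So your ``global direction-type invariant'' is exactly right; the missing piece in your outline is that one should \emph{define} the colouring by that invariant from the start, rather than try to recover it from the component structure of $\mathcal{A}$.
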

Note that a triangle is always rainbow in a proper edge-colouring, so we trivially have $\ex(n,K_r,\textnormal{rainbow-}K_r)=0$ for $r<4$.\medskip

In fact, our method can be used to prove the following more general result.

\begin{restatable}{theorem}{rainbowgeneral}\label{theorem_rbgeneral}
	Let $r\geq 4$, let $H$ be a graph, 
	and let $H$ have a proper edge-colouring with no rainbow $K_r$. Suppose that for each vertex $v$ of $H$ there is a $p_v\in\RR^m$, and for each colour $\kappa$ in the colouring there is a non-zero vector $z_\kappa$ such that for every edge $vw$ of colour $\kappa$, $z_\kappa$ is a linear combination of $p_v$ and $p_w$ with non-zero coefficients. Then $\ex(n,H,\textnormal{rainbow-}K_r)\geq n^{m_0-o(1)}$, where $m_0$ is the dimension of the subspace of $\RR^m$ spanned by the points $p_v$. 
\end{restatable}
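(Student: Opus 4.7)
The plan is to adapt the higher-dimensional Behrend-type geometric construction that yields Theorem~\ref{theorem_KrKs} to the rainbow setting, using the data $(p_v)_v$ and $(z_\kappa)_\kappa$ supplied by the hypothesis. After projecting to the span of the $p_v$ and clearing denominators, one may assume $p_v,z_\kappa\in\ZZ^{m_0}$. Let $V(G)$ be a box in $\ZZ^{m_0}$ of side $\Theta(L)$, so $n=|V(G)|=\Theta(L^{m_0})$; embeddings of $H$ are parameterised by pairs $(g,T)$, where $g\in V(G)$ is a translation and $T$ is an $m_0\times m_0$ integer matrix drawn from a set $\mathcal T$, sending the vertex $v$ of $H$ to $g+T(p_v)$. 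Since the $p_v$ span $\RR^{m_0}$, distinct pairs $(g,T)$ give distinct embeddings. The set $\mathcal T$ will be a multi-dimensional Behrend-type subset of matrices with entries in $[L]$, of size $|\mathcal T|\geq L^{m_0(m_0-1)-o(1)}$, so that the number of copies of $H$ produced is $n\cdot|\mathcal T|=n^{m_0-o(1)}$.

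I would colour the $G$-edge between $g+T(p_v)$ and $g+T(p_w)$, arising from the $H$-edge $vw$ of colour $\kappa$, by the triple $(\kappa,g,Tz_\kappa)$. On a single copy $(g,T)$ this is just $H$'s proper colouring relabelled, so proper, and across different copies the extra coordinates $(g,Tz_\kappa)$ separate colours, provided no two distinct $T_1,T_2\in\mathcal T$ (sharing a value of $g$ and of $Tz_\kappa$) happen to place a common pair of vertices---a non-degeneracy that will be ensured by the Behrend condition. The heart of the argument is to show that every $K_r$ in $G$ is contained in a single copy of $H$: a $K_r$ on $u_1,\dots,u_r$ arising from two distinct copies $(g_1,T_1)$ and $(g_2,T_2)$ via injections $\sigma_1,\sigma_2\colon\{1,\dots,r\}\to V(H)$ would satisfy
\[T_1\,p_{\sigma_1(i)}-T_2\,p_{\sigma_2(i)}=g_2-g_1\qquad(i=1,\dots,r),\]
and eliminating $g_2-g_1$ produces a non-trivial system of linear relations on the entries of $(T_1,T_2)$. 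The set $\mathcal T$ is chosen to avoid all such relations, in the same way that the classical Ruzsa--Szemerédi construction uses a Behrend set to avoid $3$-APs. Once every $K_r$ is confined to a single copy of $H$, the assumed absence of a rainbow $K_r$ in $H$ transfers to $G$, because within a copy the $G$-colours only differ from $H$'s colours by a common tag $(g,T)$, so any two $H$-edges of the same colour still yield two $G$-edges of the same colour.

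The main obstacle is the linear-algebraic design of $\mathcal T$: one has to enumerate the finitely many pairs $(\sigma_1,\sigma_2)$, check which of them yield a non-trivial equation on $(T_1,T_2)$ (for $p_v$'s in sufficiently general position essentially all do), and then construct $\mathcal T$---for instance as a product of one-dimensional Behrend sets along suitably chosen matrix entries---that simultaneously avoids every such relation while keeping $|\mathcal T|$ of order $L^{m_0(m_0-1)-o(1)}$. A secondary subtlety is that a single edge of $G$ may be produced by several distinct copies; one selects a canonical copy per edge and checks that the Behrend structure is strong enough to keep the resulting colouring both proper and rainbow-$K_r$-free.
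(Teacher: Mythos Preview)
Your lattice/Behrend plan is genuinely different from the paper's sphere construction, but it has a real gap at the colouring step. You colour an edge by $(\kappa,g,Tz_\kappa)$, which depends on the copy $(g,T)$; since for $m_0\ge 3$ a typical edge lies in $\approx L^{m_0(m_0-2)}$ copies, you then pick a canonical copy per edge. But there is no reason the canonical copies of the $\binom r2$ edges of a given $K_r$ should agree. So even if the whole $K_r$ sits inside a copy $(g_0,T_0)$ of $H$, the $G$-colours its edges receive are computed from \emph{other} copies and need not reflect the $H$-colouring pulled back via $(g_0,T_0)$; two $H$-edges of the same colour $\kappa$ can receive different $G$-colours. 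Nor does an intrinsic colour of the form $(\kappa,\lambda_{\kappa,v} u_v+\lambda_{\kappa,w} u_w)$ save you: within a copy $(g_0,T_0)$, edges $v_1v_2$ and $v_3v_4$ of $H$-colour $\kappa$ give $\lambda_1u_1+\lambda_2u_2=(\lambda_1+\lambda_2)g_0+T_0z_\kappa$ and $\lambda_3u_3+\lambda_4u_4=(\lambda_3+\lambda_4)g_0+T_0z_\kappa$, which differ unless $\lambda_1+\lambda_2=\lambda_3+\lambda_4$, and the hypothesis does not force this. The translation parameter $g$ is the obstruction.

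There is a second gap in the ``every $K_r$ in a single copy'' step. Your argument only treats $K_r$'s already known to lie in two full copies; it ignores accidental $K_r$'s assembled from edges of several copies without lying wholly in any one. And even for uniqueness, the system $T_1(p_{\sigma_1(i)}-p_{\sigma_1(1)})=T_2(p_{\sigma_2(i)}-p_{\sigma_2(1)})$ imposes only $(r-1)m_0$ scalar constraints on the $2m_0^2$ entries of $(T_1,T_2)$. When $r-1<m_0$ --- as in the king's-graph application with $r=4$ and $m_0=k+l-1$ --- a random or deletion argument cannot eliminate all bad pairs at the density you need, and this relation is not of the translation-invariant type to which Behrend applies.

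The paper avoids both issues by working on $S^d$ with a graph that is $|V(H)|$-partite from the outset: the part labelled $v$ consists of random unit vectors $x_{v,i}$, and an edge $x_{v,i}x_{w,j}$ exists when $\langle x_{v,i},x_{w,j}\rangle\approx\langle p_v,p_w\rangle$. Any $K_r$ then automatically selects a $K_r$ in $H$. The colour of $x_{v,i}x_{w,j}$ is intrinsic: it is $(\kappa,\ell)$, where $R_\kappa q_\ell$ is the nearest of finitely many pre-placed ``allowed directions'' to $\lambda_{\kappa,v}x_{v,i}+\lambda_{\kappa,w}x_{w,j}$. The key calculation (Claim~2) shows that for any $K_r$ in the graph and any two $H$-edges $v_1v_2,v_3v_4$ of colour $\kappa$, the inner-product constraints alone force $\|\lambda_1x_1+\lambda_2x_2-\lambda_3x_3-\lambda_4x_4\|$ to be small, so both vectors snap to the same $q_\ell$. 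No copy bookkeeping is needed, and the argument is insensitive to how $r$ compares with $m_0$.
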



It is easy to see that Theorem \ref{theorem_rainbow} is a special case of Theorem \ref{theorem_rbgeneral}, but Theorem \ref{theorem_rbgeneral} also allows us to determine the behaviour of $\ex(n,H,\textnormal{rainbow-}K_r)$ for several other natural choices of $H$. We give some examples in Section \ref{sec_examples}.

Theorem \ref{theorem_rbgeneral} is `almost equivalent' to the following, slightly weakened, alternative version.
\bigskip

\noindent
\textbf{Theorem \ref{theorem_rbgeneral}$'$.}
\textit{Let $r\geq 4$, let $H$ be a graph, and let $c$ be a proper edge-colouring of $H$ without a rainbow $K_r$. Suppose that for each vertex $v\in V(H)$ we have a vector $p_v\in\RR^{m-1}$, and for each colour $\kappa$ of $c$ the lines through the pairs $p_v, p_w$ with $c(vw)=\kappa$ are either all parallel, or all go through the same point and that point is different from $p_v,p_w$ unless $p_v=p_w$. Assume that no $(m-2)$-dimensional affine subspace contains all the points $p_v$. Then $\ex(n,H,\textnormal{rainbow-}K_r)\geq n^{m-o(1)}$.}\bigskip


It is easy to see that Theorem \ref{theorem_rbgeneral}$'$ is equivalent to the weakened version of Theorem \ref{theorem_rbgeneral} where we make the additional assumption that each $p_v$ is non-zero. Indeed, given a configuration of points $p_v$ as in Theorem \ref{theorem_rbgeneral} (with $m=m_0$), we can project it from the origin to an appropriate affine $(m-1)$-dimensional subspace not going through the origin to get a configuration as in Theorem \ref{theorem_rbgeneral}$'$. Conversely, a configuration of points $p_v$ as in Theorem \ref{theorem_rbgeneral}$'$ gives a configuration as in Theorem \ref{theorem_rbgeneral} by taking the points $p_v\times \{1\}\in \RR^m$.

\section{The idea of the construction, and a preliminary lemma}\label{subsec_constr}
We now briefly describe the construction used in our proof of Theorem \ref{theorem_KrKs}. For simplicity, we focus on the case $r=2, s=3$, i.e., the Ruzsa--Szemerédi problem.

Consider the $d$-dimensional sphere $S^d=\{x\in\RR^{d+1}:\Vert x\Vert=1\}$. (We will choose $d$ to be about $\sqrt{\log{n}}$.) Join two points of the sphere by an edge if the angle between the corresponding vectors is between $2\pi/3-\delta$ and $2\pi/3+\delta$, where $\delta$ is some appropriately chosen small number (roughly $e^{-\sqrt{\log{n}}}$). Then there are `few' triangles containing any given edge, since if $xy$ is an edge then any point $z$ such that $xyz$ is a triangle is restricted to lie in a small neighbourhood around the point $-(x+y)$. However, there are `many' edges, since the edge-neighbourhood of a point is a set of points around a codimension-1 surface, which is much larger then the neighbourhood of a single point. Choosing the constants appropriately, we can achieve that if we pick $n$ random points then any two of them form an edge with probability $n^{-o(1)}$, and any three of them form a triangle with probability $n^{-1-o(1)}$. Then any edge is expected to be in $n^{-o(1)}$ triangles and there are $n^{2-o(1)}$ edges. After some modification, we get a graph with $n^{2-o(1)}$ triangles in which any edge extends to at most one triangle.

The general construction is quite similar. We want to define the edges in such a way that knowing the position of any $r$ of the vertices of a $K_s$ restricts the remaining $s-r$ vertices to small neighbourhoods around certain points, but knowing the position of $i$ points with $i<r$ only restricts the remaining points to a neighbourhood of a codimension-$i$ surface. For example, when $(r,s)=(3,4)$, we can define our graph by joining two points if the angle between the corresponding vectors is close to the angle given by two vertices of a regular tetrahedron (centred at the origin).

In fact, our construction and the construction of Ruzsa and Szemer\'edi based on the Behrend set are more similar than they might at first appear, which also explains why they give similar bounds (namely $n^2e^{-O(\sqrt{\log{n}})}$ for the case $r=2,s=3$). Behrend's construction \cite{behrend1946sets} of a large set with no arithmetic progression of length 3 starts by observing that for any positive integers $k,d$ there is some $m$ such that the grid $\{1,\dots,k\}^d$ intersects the sphere $\{x\in\RR^d:\Vert x \Vert^2=m\}$ in a set $A$ consisting of at least $k^d/(dk^2)$ points. This set $A$ has no arithmetic progression of length $3$. (In Behrend's construction, this is transformed into a subset of $\ZZ$ using an appropriate map, but this is unnecessary for our purposes.) Repeating the construction from Section \ref{sec.intrKrKsrainbow}, we define a tripartite graph $G$ on vertex set $X\cup Y\cup Z$ where $X=\{1,\dots,k\}^d, Y=\{1,\dots,2k\}^d, Z=\{1,\dots,3k\}^d$, and edges given by the edges of the triangles $(x,x+a,x+2a)\in X\times Y\times Z$ for $x\in X, a\in A$. Explicitly, for $x\in X, y\in Y, z\in Z$, we join $x$ and $y$ if $\Vert x-y\Vert =m^{1/2}$ (and $y_i\geq x_i$ for all $i$), we join $y$ and $z$ if $\Vert z-y\Vert =m^{1/2}$ (and $z_i\geq y_i$), and we join $x$ and $z$ if $\Vert x-z\Vert =2m^{1/2}$ (and $z_i\geq x_i$). This gives the same phenomenon as our construction: the neighbourhood of a point $x$ is given by a codimension-1 condition, but the joint neighbourhood of two points is a single point, since $y$ must be the midpoint of $x$ and $z$.

\medskip

We conclude this section with the following technical fact, whose proof we include for completeness. Given unit vectors $v,w$, we write $\angle(v,w)$ for the angle between $v$ and $w$ -- that is, for $\cos^{-1}(\langle v,w\rangle)$.
\begin{lemma}\label{lemma_spherebounds}
	There exist constants $0<\alpha<B$ such that the following holds. Let $d$ be a positive integer, let $0<\rho\leq 2$ and let $v\in S^d$. Let $X_\rho=\{w\in S^d: \Vert v-w\Vert <\rho\}$. Let $\mu$ denote the usual probability measure on $S^d$. Then
	\[\alpha^d\rho^d\leq \mu(X_\rho)\leq B^d\rho^d.\]
Furthermore, for any $-1<\xi<1$ there exists $\beta>0$ such that for every positive integer $d$, every point $v\in S^d$, and every $0\leq\delta\leq 2$, the set $Y_{\xi,\delta}=\{w\in S^d: |\langle v,w\rangle-\xi|<\delta\}$ has
\[\mu(Y_{\xi,\delta})\geq \beta^d\delta.\]
\end{lemma}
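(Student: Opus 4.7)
The plan is to reduce both parts to direct estimates of the standard spherical cap area formula. The set $X_\rho$ is a spherical cap around $v$, and $Y_{\xi,\delta}$ is the annular region between two caps concentric at $v$, so both measures reduce to integrals of $\sin^{d-1}\theta$ over an interval. The prefactor $c_d := \Gamma((d+1)/2)/(\sqrt\pi\,\Gamma(d/2))$ in the cap formula grows like $\sqrt d$ by Stirling, so in particular $c_d^{1/d}$ is bounded between positive absolute constants; this lets us absorb $c_d$ into the $\alpha^d$ and $B^d$ factors without changing the shape of the bounds.

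For Part 1, I would convert chord length to angle via $\|v-w\|^2 = 2 - 2\langle v,w\rangle = 4\sin^2(\theta/2)$, so $X_\rho$ is the cap of half-angle $\theta_0 = 2\arcsin(\rho/2)$, satisfying $\rho \leq \theta_0 \leq \pi\rho/2$ (from the elementary inequalities $x\leq \arcsin x \leq (\pi/2)x$ on $[0,1]$). The standard area formula gives
\[\mu(X_\rho) = c_d \int_0^{\theta_0} \sin^{d-1}\theta\, d\theta.\]
For the upper bound, $\sin\theta \leq \theta$ yields $\int_0^{\theta_0}\sin^{d-1}\theta\,d\theta \leq \theta_0^d/d$, and plugging in $\theta_0 \leq \pi\rho/2$ gives the required $B^d\rho^d$. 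For the lower bound, I would split into two regimes: when $\theta_0 \leq \pi/2$, the concavity inequality $\sin\theta \geq 2\theta/\pi$ together with $\theta_0 \geq \rho$ gives $\mu(X_\rho) \geq \alpha^d\rho^d$ directly; when $\theta_0 > \pi/2$, the cap already contains a closed hemisphere so $\mu(X_\rho) \geq 1/2$, which beats $(2\alpha)^d \geq \alpha^d\rho^d$ once $\alpha$ is chosen small enough.

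For Part 2, I would substitute $t = \cos\theta$ to write
\[\mu(Y_{\xi,\delta}) = c_d \int_{\max(\xi-\delta,-1)}^{\min(\xi+\delta,1)} (1-t^2)^{(d-2)/2}\, dt,\]
and then split on the size of $\delta$ relative to $(1-|\xi|)/2$. In the regime $\delta \leq (1-|\xi|)/2$, the integration interval has length exactly $2\delta$ and lies inside $[-(1+|\xi|)/2,\,(1+|\xi|)/2]$, so $1-t^2 \geq c_\xi := 1 - ((1+|\xi|)/2)^2 > 0$ throughout, yielding $\mu(Y_{\xi,\delta}) \geq 2 c_d\,\delta\, c_\xi^{(d-2)/2} \geq \beta^d\delta$ for a suitable $\beta(\xi) > 0$. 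In the regime $\delta > (1-|\xi|)/2$, monotonicity gives $Y_{\xi,\delta} \supseteq Y_{\xi,(1-|\xi|)/2}$, so the previous bound applies with $(1-|\xi|)/2$ in place of $\delta$; since $\delta \leq 2$, the ratio $(1-|\xi|)/(2\delta)$ is a positive constant depending only on $\xi$, which we absorb by shrinking $\beta$ very slightly.

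There is no essential obstacle: the whole lemma is a routine computation once the cap-area formula is written down. The only mild nuisance is ensuring that the constants are uniform in $d$, which is why I split each part into two regimes; small values of $d$ (in particular $d=1$ in Part 2, where the exponent $(d-2)/2$ is negative) can be handled separately by noting that $\mu(X_\rho)$ and $\mu(Y_{\xi,\delta})$ are bounded below by positive constants in those fixed-dimensional cases, which is more than enough.
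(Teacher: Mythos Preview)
Your proof is correct and follows essentially the same strategy as the paper's: reduce to the spherical-cap integral $\int \sin^{d-1}\theta\,d\theta$ and bound it above and below using $\tfrac{2}{\pi}\theta\le\sin\theta\le\theta$ on $[0,\pi/2]$. The only cosmetic differences are that you handle the normalising constant via the explicit Gamma-function formula $c_d$ and Stirling (the paper instead bounds $\int_0^\pi \sin^{d-1}\theta\,d\theta$ directly with the same $\sin\theta\approx\theta$ estimate), and in Part~2 you substitute $t=\cos\theta$ where the paper stays in the angle variable; neither change is substantive.
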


\begin{proof}
	Using the usual spherical coordinate system, we see that for $0\leq \varphi\leq \pi$ the set $Z_\varphi=\{w\in S^d: \angle(v,w)<\varphi\}$ satisfies
	\begin{equation}\label{eq_spherecalc}
	\mu(Z_\varphi)=\frac{\int_{0}^{\varphi} \sin^{d-1}{\theta}\diff\theta}{\int_{0}^{\pi} \sin^{d-1}{\theta}\diff\theta}.
	\end{equation}

	But we have $\theta\geq \sin\theta\geq \frac{2}{\pi}\theta$ for $0\leq \theta\leq \pi/2$. Thus, ${\int_{0}^{t} \sin^{d-1}{\theta}\diff\theta}$ is between $\frac{c_1^{d-1}}{d}t^d$ and $\frac{1}{d}t^d$ for all $0\leq t\leq \pi /2$ (for some constant $0<c_1<1$). Using this bound for both the numerator and the denominator in \eqref{eq_spherecalc}, we deduce that $\alpha_0^d\varphi^{d}\leq \mu(Z_\varphi)\leq B_0^d\varphi^d$ for some absolute constants $0<\alpha_0<B_0$. But if $\angle(v,w)=\varphi$ and $\Vert v-w\Vert =\rho\leq 2$, then $\rho\leq \varphi\leq \rho\pi/2$, so $Z_{\rho} \subseteq X_\rho\subseteq Z_{\rho\pi/2}$. The first claim follows.
	
	For the second claim, let $0<\varphi<\pi$ such that $\xi=\cos\varphi$ and let $\epsilon_0=\min\{\varphi/2,(\pi-\varphi)/2\}$. Write $W_{\varphi,\epsilon}=\{w\in S^d: \varphi-\epsilon<\angle(v,w)<\varphi+\epsilon\}$. For $0<\epsilon<\epsilon_0$ we have 
	\[	\mu(W_{\varphi,\epsilon})=\frac{\int_{\varphi-\epsilon}^{\varphi+\epsilon} \sin^{d-1}{\theta}\diff\theta}{\int_{0}^{\pi/2} \sin^{d-1}{\theta}\diff\theta}.\]
	
	But also $\sin\theta\geq \min\{\sin(\varphi-\epsilon_0),\sin(\varphi+\epsilon_0)\}=\sin(\epsilon_0)$ when $\varphi-\epsilon_0\leq \theta\leq \varphi+\epsilon_0$. Writing $\beta_0=\sin(\epsilon_0)>0$, it follows that whenever $\epsilon<\epsilon_0$, then $\mu(W_{\varphi,\epsilon})\geq\frac{2\epsilon \beta_0^{d-1}}{\pi/2}$.
	However, we have $|\cos(\theta)-\cos(\varphi)|\leq |\theta-\varphi| 
	$, so $Y_{\xi,\delta}\supseteq W_{\varphi,\delta}$. 
	Choosing some sufficiently small $\beta$, the second claim follows.
\end{proof}

\section{The generalized Ruzsa--Szemerédi problem}

In this section we prove the first of our main results, Theorem \ref{theorem_KrKs}. In the case $r=2, s=3$, the construction is based, as we saw in Section~\ref{subsec_constr}, on the observation that if we wish to find three vectors in $S^d=\{x\in \RR^{d+1}: \Vert x\Vert =1\}$ in such a way that the angle between any two of them is $120^\circ$, and if we choose the vertices one by one, then there are $d$ degrees of freedom for the first vertex and $d-1$ for the second, but the third is then uniquely determined. This gives us an example of a `continuous graph' with `many' edges, such that each edge is in exactly one triangle, and a suitable perturbation and discretization of this graph gives us a finite graph with $n^{2-o(1)}$ 
triangles such that each edge belongs to at most one triangle. 

To generalize this to arbitrary $(r,s)$ we need to find a configuration of $s$ unit vectors (where by `configuration' we mean an $s\times s$ symmetric matrix that specifies the angles, or equivalently inner products, between the unit vectors) with the property that if we choose the points of the configuration one by one, then for $i\leq r$ the $i$\thth point can be chosen with $d+1-i$ degrees of freedom, but from the $(r+1)$\textsuperscript{st} point onwards all points are uniquely determined. It turns out that all we have to do is choose an arbitrary collection of $s$ points $p_1,\dots,p_s$ in general position from the sphere $S^{r-1}$ and take the angles $\angle(p_i,p_j)$. To see that this works, suppose we we wish to choose $x_1,\dots,x_s\in S^d$ one by one in such a way that $\langle x_i,x_j\rangle=\langle p_i,p_j\rangle$ for every $i,j$. Suppose that we have chosen $x_1,\dots,x_r$ and let $V$ be the $r$-dimensional subspace that they generate. Let $u_{r+1}$ be the orthogonal projection of $x_{r+1}$ to $V$. Then $\langle u_{r+1},x_i\rangle=\langle x_{r+1},x_i\rangle$ for each $i\leq r$, and $u_{r+1}\in V$, so $u_{r+1}$ is uniquely determined. Furthermore, since the angles $\langle p_i,p_j\rangle$ are equal to the angles $\langle x_i,x_j\rangle$ when $i,j\leq r$ and to the angles $\langle x_i,u_{r+1}\rangle$ when $i\leq r, j=r+1$, and $p_{r+1}$ is a unit vector, it must be that $u_{r+1}$ is a unit vector, which implies that $x_{r+1}=u_{r+1}$. Since this argument made no use of the ordering of the vectors, it follows that any $r$ vectors in a configuration determine the rest, as claimed.

We shall now use this observation as a guide for constructing a finite graph with many copies of $K_s$ such that each $K_r$ is contained in at most one $K_s$.

As above, pick $s$ `reference' points $p_1, \dots, p_s$ in general position on the sphere $S^{r-1}$. Since for any set $B\subseteq \{1,\dots,s\}$ of size $r$ the points $p_b$ ($b\in B$) form a basis of $\RR^r$, we may write, for any $a$, 
\[p_a=\sum_{b\in B}{\lambda_{B,a,b}p_b}\]
for some real constants $\lambda_{B,a,b}$. \medskip

For any $c>0$ and positive integers $N,d$ we define an $s$-partite random graph $G_{N,d,c}$ as follows. (The graph will also depend on $r,s,p_1,\dots,p_s$, but for readability we drop these dependencies from the notation.) Consider the usual probability measure on the $d$-sphere $S^d$. Pick, independently and uniformly at random, $sN$ points $x_{a,i}$ ($1\leq a\leq s, 1\leq i\leq N$) on $S^d$: these points form the vertex set. Join two points $x_{a,i}$ and $x_{b,j}$ by an edge if $a\not =b$ and $|\langle x_{a,i},x_{b,j}\rangle-\langle p_a,p_b\rangle|<c$. Write $V_a=\{x_{a,i}: 1\leq i\leq N\}$ so that $G_{N,d,c}$ is $s$-partite with classes $V_1,\dots,V_s$.

We also define a graph $G_{N,d,c}'$ as follows. Let $M_0$ be the maximum among all values of $|\lambda_{B,a,b}|$ and $\lambda_{B,a,b}^2$, and let $M=2(r+1)\sqrt{M_0}$. 
 Then $G_{N,d,c}'$ is obtained from $G_{N,d,c}$ by deleting all vertices $x_{a,i}$ for which there is another vertex $x_{a,j}$ ($i\not =j$) such that $\Vert x_{a,i}-x_{a,j}\Vert <M\sqrt{c}$.

This graph is designed to be finite and to have the property that any copy of $K_s$ must be close to a configuration with angles determined by the points $p_1,\dots,p_s$. The vertex deletions are there to ensure that the vertices are reasonably well separated. This will imply that no $K_r$ is contained in more than one $K_s$, since once $r$ vertices of a $K_s$ are chosen, the remaining vertices are constrained to lie in small neighbourhoods.

\begin{lemma}
	The graph $G_{N,d,c}'$ has the property that any of its subgraphs isomorphic to $K_r$ is contained in at most one subgraph isomorphic to $K_s$ (for any choices of $r,s,p_1,\dots,p_s,N,d,c$).
\end{lemma}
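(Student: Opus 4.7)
The plan is to argue by contradiction. Suppose some $K_r$ of $G_{N,d,c}'$ is contained in two distinct copies of $K_s$, call them $T$ and $T'$. Since $G_{N,d,c}'$ is $s$-partite with classes $V_1,\dots,V_s$, the common $K_r$ uses exactly one vertex from each of $r$ classes indexed by a set $B\subseteq\{1,\dots,s\}$ with $|B|=r$; write these vertices as $x_{b,i_b}$ for $b\in B$. For each $a\notin B$, $T$ and $T'$ contribute vertices $y_a\in V_a$ and $y'_a\in V_a$, and since $T\ne T'$ we must have $y_a\ne y'_a$ for at least one such $a$. Fix such an $a$.

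The heart of the proof is the following claim: any unit vector $y\in S^d$ that satisfies the edge condition $|\langle y,x_{b,i_b}\rangle-\langle p_a,p_b\rangle|<c$ for every $b\in B$ lies within distance $M\sqrt{c}/2$ of the deterministic target point $\sum_{b\in B}\lambda_{B,a,b}x_{b,i_b}$. To prove this, set $w=y-\sum_{b\in B}\lambda_{B,a,b}x_{b,i_b}$ and expand
\[
\|w\|^2 = 1 - 2\sum_{b\in B}\lambda_{B,a,b}\langle y,x_{b,i_b}\rangle + \sum_{b,b'\in B}\lambda_{B,a,b}\lambda_{B,a,b'}\langle x_{b,i_b},x_{b',i_{b'}}\rangle.
\]
Writing $\langle y,x_{b,i_b}\rangle=\langle p_a,p_b\rangle+\epsilon_b$ and $\langle x_{b,i_b},x_{b',i_{b'}}\rangle=\langle p_b,p_{b'}\rangle+\epsilon_{bb'}$ with $|\epsilon_b|,|\epsilon_{bb'}|<c$, the $\epsilon$-free part collapses to $\bigl\|p_a-\sum_b\lambda_{B,a,b}p_b\bigr\|^2=0$, which holds by the defining identity $p_a=\sum_{b\in B}\lambda_{B,a,b}p_b$ and $\|p_a\|=1$. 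The remaining terms are a sum of at most $2r+r^2$ contributions of size at most $M_0 c$ each, giving $\|w\|^2<(2r+r^2)M_0c\le(r+1)^2M_0c=(M/2)^2 c$, and hence $\|w\|<M\sqrt{c}/2$.

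Applying this bound to both $y_a$ and $y'_a$ (the edge conditions hold for both by the definition of $G_{N,d,c}$), the triangle inequality yields $\|y_a-y'_a\|<M\sqrt{c}$. But $y_a$ and $y'_a$ both lie in the class $V_a$ and both belong to $G_{N,d,c}'$, which by construction requires every pair of distinct vertices in the same class to be at distance at least $M\sqrt{c}$. This forces $y_a=y'_a$, contradicting the choice of $a$.

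The only real obstacle is verifying that the constant $M=2(r+1)\sqrt{M_0}$ chosen in the construction matches the constant that emerges from this calculation; beyond the cancellation of the $\epsilon$-free part (which is guaranteed by the defining relation of the $\lambda_{B,a,b}$'s) everything else is a routine triangle-inequality bookkeeping step, and no probabilistic input is needed here since the statement holds deterministically for every realization of the random vertex set.
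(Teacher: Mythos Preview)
Your proof is correct and follows essentially the same approach as the paper: both arguments expand $\bigl\|y-\sum_{b\in B}\lambda_{B,a,b}x_{b,i_b}\bigr\|^2$, use the edge conditions to reduce to the vanishing quantity $\bigl\|p_a-\sum_b\lambda_{B,a,b}p_b\bigr\|^2$, and bound the remaining error by $(r^2+2r)M_0c<(r+1)^2M_0c$ before applying the separation condition in the definition of $G_{N,d,c}'$. The only difference is cosmetic---you phrase it as a proof by contradiction focused on a single class where the two $K_s$'s differ, whereas the paper shows directly that the two extensions agree on every class.
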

\begin{proof}
	Let $x_{a_1,i_i},\dots, x_{a_r,i_r}$ be points that form a $K_r$. Then necessarily all $a_t$ are distinct. Suppose that we have two extensions $H_1,H_2$ of this $K_r$ to a $K_s$. Then both $H_1$ and $H_2$ intersect each class $V_a$ in exactly one point. We now show that for each $a$ this point must be the same for $H_1$ and $H_2$, which will imply the lemma.
	
	Suppose that $H_1$ intersects $V_a$ in point $x$. Write $B=\{a_1,\dots,a_r\}$. Then
	\begin{align*}
	\left\Vert  x-\sum_{t=1}^{r}\lambda_{B,a,a_t}x_{a_t}\right\Vert ^2&=\left\langle x-\sum_{t=1}^{r}\lambda_{B,a,a_t}x_{a_t}, x-\sum_{t=1}^{r}\lambda_{B,a,a_t}x_{a_t}\right\rangle\\
	&=\langle x,x\rangle-2\sum_{t=1}^r{\lambda_{B,a,a_t}\left\langle x, x_{a_t}\right\rangle}+\sum_{t,t'=1}^{r}{\lambda_{B,a,a_t}\lambda_{B,a,a_{t'}}\langle x_{a_t}, x_{a_{t'}}\rangle}\\
	&\leq 1 -2\sum_{t=1}^r\lambda_{B,a,a_t}\langle p_a,p_{a_t}\rangle+2c\sum_{t=1}^r{|\lambda_{B,a,a_t}|}\\
	&\hspace{2cm}+\sum_{t,t'=1}^{r}{\lambda_{B,a,a_t}\lambda_{B,a,a_{t'}}\langle p_{a_t},p_{a_{t'}}\rangle}+c\sum_{t,t'=1}^{r}{|\lambda_{B,a,a_t}\lambda_{B,a,a_{t'}}|}\\
	&\leq \left\langle p_a-\sum_{t=1}^{r}\lambda_{B,a,a_t}p_{a_t},p_a-\sum_{t=1}^{r}\lambda_{B,a,a_t}p_{a_t}\right\rangle + 2rcM_0+r^2cM_0\\
	&= (r^2+2r)cM_0.
	\end{align*}
	It follows that $\left\Vert  x-\sum_{t=1}^{r}\lambda_{B,a,a_t}x_t\right\Vert < (r+1)\sqrt{M_0c}$. Similarly, if $H_2$ intersects $V_a$ in point $y$ then $\left\Vert  y-\sum_{t=1}^{r}\lambda_{B,a,a_t}x_t\right\Vert < (r+1)\sqrt{M_0c}$. Hence $\Vert x-y\Vert < 2(r+1)\sqrt{M_0c}=M\sqrt c$. By the definition of $G'_{N,d,c}$, we must have $x=y$.
\end{proof}

To prove Theorem \ref{theorem_KrKs}, it suffices to show that the expected number of copies of $K_s$ in $G_{N,d,c}'$ is at least $N^{r}e^{-O(\sqrt{\log{N}})}$ for suitable choices of $d$ and $c$. For this purpose we shall use the following technical lemma. For later convenience (in Section \ref{sec_rainbow}), we state it in a slightly more general form than required here, to allow the possibility that $r=s$ and the possibility that $p_1,\dots,p_s$ are not in general position (but still span $\RR^r$).

\begin{restatable}{lemma}{appendixlemma}\label{lemma_probabilityKs}
	Let $1\leq r\leq s$ be positive integers and let $p_1, \dots, p_s$ be points on $S^{r-1}$ such that $p_1,\dots, p_r$ form a basis of $\RR^r$. Then there exist constants $\alpha>0$ and $h$ 
	such that for any $d\geq r$ and $0<c<1$ the probability that a set $\{x_{a}:1\leq a \leq s\}$ of random unit vectors (chosen independently and uniformly) on $S^d$ 
	satisfies $|\langle x_a,x_b\rangle-\langle p_a,p_b\rangle|<c$ for all $a,b$ is at least $\alpha^{d}c^{d(s-r)/2+h}$.
\end{restatable}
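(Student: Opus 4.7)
The plan is to build the $s$ unit vectors one at a time and lower-bound, for each $i$, the conditional probability that $x_i$ meets its constraints $|\langle x_i,x_j\rangle-\langle p_i,p_j\rangle|<c$ for $j<i$, given that $x_1,\ldots,x_{i-1}$ already satisfy theirs. Throughout, the inductive hypothesis implies that the Gram matrix $G_{i-1}$ of $x_1,\ldots,x_{i-1}$ lies within $O(c)$ of the reference Gram matrix $(\langle p_a,p_b\rangle)_{a,b\leq i-1}$, which I use to control inverses and projections. The induction splits naturally into two regimes, $i\leq r$ and $i>r$.

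In the regime $i\leq r$, pick an orthonormal basis of $\RR^{d+1}$ whose first $i-1$ elements span $\mathrm{span}(x_1,\ldots,x_{i-1})$, and write $x_j=\sum_k T_{jk}e_k$ with $TT^\top=G_{i-1}$. The constraints become $|Ty-v|_\infty<c$, where $y$ is the vector of first $i-1$ coordinates of $x_i$ and $v_j=\langle p_i,p_j\rangle$; this defines a parallelepiped $P$ of Lebesgue volume $(2c)^{i-1}/\sqrt{\det G_{i-1}}=\Theta(c^{i-1})$. A standard computation gives the marginal density of $y$ under the uniform measure on $S^d$ as $C_{d,i-1}(1-\|y\|^2)^{(d-i)/2}$ for $\|y\|<1$. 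Because $p_1,\ldots,p_r$ form a basis, $p_i\notin\mathrm{span}(p_1,\ldots,p_{i-1})$, so the centre $T^{-1}v$ of $P$ satisfies $\|T^{-1}v\|^2=v^\top G_{i-1}^{-1}v\leq 1-\eta_i$ for some $\eta_i>0$ depending only on the $p_a$; for $c$ small, $\|y\|^2\leq 1-\eta_i/2$ throughout $P$, the density is at least $(\eta_i/2)^{(d-i)/2}$, and the conditional probability is at least $\alpha_1^d c^{i-1}$ for some $\alpha_1>0$.

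In the regime $i>r$ I instead project $x_i$ onto $V=\mathrm{span}(x_1,\ldots,x_r)$ and use only the constraints with $j\leq r$. These give a parallelepiped $P$ of volume $\Theta(c^r)$ in the first $r$ coordinates of $x_i$. The crucial new feature is that the centre $z^*=T^{-1}v$ satisfies $v^\top G_r^{-1}v\approx 1$: when $G_r=G_r^{(p)}$ exactly, the identity $p_i=\sum_k\lambda_{B,i,k}p_k$ with $B=\{1,\ldots,r\}$ yields $v^\top(G_r^{(p)})^{-1}v=\langle p_i,p_i\rangle=1$. So $z^*$ lies, up to $O(c)$, on the unit sphere in $V$, and I must shift it radially inward by $t=\Theta(c)$ to reach the interior of the disc $\|y\|<1$. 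Using standard bounds on the operator norm of $T$ derived from $G_r\approx G_r^{(p)}$, a shift of size $\Theta(c)$ keeps $y$ inside $P$, and the resulting sub-region has volume $\Omega(c^r)$ while supporting density $(1-\|y\|^2)^{(d-r-1)/2}\geq(\Theta(c))^{(d-r-1)/2}$. This yields a conditional probability at least $\alpha_2^d c^{(d+r-1)/2}$. It remains to verify the constraints with $r<j<i$; these follow from a rigidity argument identical to the one already used in the preceding $K_r$-uniqueness proof, namely that once the first $r$ inner products of $x_i$ are pinned down, the $V$-projection of $x_i$ is forced within $O(\sqrt{c})$ of $\sum_k\lambda_{B,i,k}x_k$, which in turn forces $\langle x_i,x_j\rangle$ within $O(c)$ of $\langle p_i,p_j\rangle$ for every $j$. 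A constant-factor shrinkage of the working tolerance (absorbed into $\alpha$) then ensures the implied constraints hold with the stated tolerance $c$.

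Multiplying the conditional probabilities over $i=1,\ldots,s$ gives a lower bound
\[
\alpha^{d}\cdot c^{\,r(r-1)/2+(s-r)\bigl((d-r-1)/2+r\bigr)}=\alpha^{d}c^{\,d(s-r)/2+s(r-1)/2},
\]
matching the statement with $h=s(r-1)/2$. The main obstacle is the regime $i>r$: the constraint parallelepiped is centred essentially on the unit sphere in $V$, so there is no room-to-spare argument; the proof hinges on quantitatively balancing the $O(c)$ width of $P$ against the $O(c)$ radial shift needed to bring $1-\|y\|^2$ up to order $c$, while tracking how the perturbation $G_r\approx G_r^{(p)}$ propagates to the matrix $T$.
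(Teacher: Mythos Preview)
Your overall architecture mirrors the paper's: handle $x_1,\dots,x_r$ first, then each $x_i$ with $i>r$ conditioned on the first $r$, and finally recover the cross-constraints $\langle x_i,x_j\rangle$ for $i,j>r$ by the same rigidity computation the paper uses. Where you differ is in the probability estimates themselves. The paper proves both steps by induction on $r$, at each stage rotating so that $x_1$ is a coordinate vector and reducing to the $(r-1)$-case (its Lemmas~A.1 and~A.2); you instead write down the marginal density of the projection onto $V=\mathrm{span}(x_1,\dots,x_r)$ explicitly and integrate over the parallelepiped $P$. Your route is more concrete and delivers the explicit value $h=s(r-1)/2$; the paper's is more modular but leaves $h$ implicit.

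There is, however, one genuine gap in your $i>r$ step. You need a sub-region of $P$ of volume $\Omega(c^r)$ on which $1-\|y\|^2\geq\Theta(c)$, and this is not automatic under a uniform tolerance~$c$. Writing $E=G_r-G_r^{(p)}$, one has $\|z^*\|^2-1=-\lambda^\top E\lambda+O(c^2)$, which can be as large as $(\|\lambda\|_1^2-\|\lambda\|_2^2)c$ in absolute value; on the other hand the maximum of $-2w\cdot\lambda$ over $w\in(-c,c)^r$ is only $2\|\lambda\|_1 c$. When $\|\lambda\|_1$ is large (as happens whenever $p_1,\dots,p_r$ are close to dependent) the former dominates and $P\cap\{\|y\|<1\}$ can be empty. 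The remedy---and this is precisely why the paper states its key Lemma~A.1 with the hypothesis $|\langle x_i,x_j\rangle-\langle p_i,p_j\rangle|<\delta c$ for $i,j\leq r$---is to impose a \emph{tighter} tolerance $\delta c$ on the Gram matrix of the first $r$ vectors while keeping tolerance $c$ on the constraints of $x_i$ against them; then $\|z^*\|^2-1=O(\delta c)$ and the balancing goes through for $\delta$ small depending only on the $p_a$. Your uniform ``constant-factor shrinkage of the working tolerance'' does not accomplish this, since it rescales both sides of the inequality equally.
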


We may think of the conclusion of Lemma \ref{lemma_probabilityKs} as follows. The dominant (smallest) factor in the probability above is the factor $c^{d(s-r)/2}$. The probability should be close to this because if we imagine placing the $s$ points one by one and we have already picked $x_1,\dots,x_i$ joined to each other, then
\begin{itemize}
	\item if $i<r$, then $x_{i+1}$ is restricted to a neighbourhood of a codimension-$i$ surface, so with reasonably large probability (comparable to $c^{i}$)
	 it is connected to all previous vertices;
	\item if $i\geq r$, then the linear dependencies between the points restrict $x_{i+1}$ to be in a ball of radius about $c^{1/2}$ around a certain point, which has measure about $c^{d/2}$ (which is much smaller than $c^{r}$).
\end{itemize}
The proof of Lemma \ref{lemma_probabilityKs} is given in an appendix.

\begin{proof}[Proof of Theorem \ref{theorem_KrKs}]
	By Lemma \ref{lemma_spherebounds}, there are constants $c_0,B,C$ such that if $c<c_0$ then the probability that a given vertex $x_{a,i}$ is removed from $G_{N,d,c}$ when forming $G_{N,d,c}'$ is at most $NB^d(M\sqrt{c})^d\leq NC^dc^{d/2}$. Here $B>0$ is an absolute constant and the constants $C,c_0>0$ depend on $r,s,p_1,\dots,p_s$ only. Moreover, the event `$x_{a,i}$ is removed' is independent of any event of the form `$x_{1,i_1},\dots,x_{s,i_s}$ form a $K_s$ in $G_{N,d,c}$'. Using Lemma \ref{lemma_probabilityKs}, we deduce that the probability that $x_{1,i_i},\dots,x_{s,i_s}$ is contained in $G_{N,d,c}'$ and forms a $K_s$ is at least $(1-sNC^dc^{d/2})\alpha^dc^{d(s-r)/2+h}$ (where $\alpha, h$ depend on $r,s,p_1,\dots,p_s$ only). So the expected number of copies of $K_s$ in $G_{N,d,c}'$ is at least
	\[N^s(1-sNC^dc^{d/2})\alpha^dc^{d(s-r)/2+h}.\]
	
	If $c=(2sNC^d)^{-2/d}$, then this is at least
	\begin{equation}\label{eq_optimisation}
	\frac{1}{2}N^s\alpha^d\frac{1}{(2sC^d)^{s-r}}N^{r-s}(2sNC^d)^{-2h/d}\geq \eta^dN^{r-2h/d}=N^re^{-Ed-(2h/d)\log{N}}	
	\end{equation}
	for some constants $\eta>0$ and $E$ not depending on $N,d$.
	
Choosing $d=\lfloor \sqrt{\log{N}}\rfloor $, this is $N^re^{-O(\sqrt{\log{N}})}$, and $c<c_0$ when $N$ is sufficiently large. The result follows, as $G_{N,d,c}'$ has at most $Ns$ vertices.
\end{proof}

Note that our proof in fact also gives the correct (and trivial) lower bound $\Theta(n)$ in the case $r=1$, since if $r=1$ then $h=0$ so we may choose $d$ to be a constant and get $\Theta(N)$ in \eqref{eq_optimisation}.

\section{Generalized rainbow Turán numbers for complete graphs}\label{sec_rainbow}

We now turn to the proofs of  our results about generalized rainbow Turán numbers (Theorems \ref{theorem_rainbow} and \ref{theorem_rbgeneral}). First we recall a general result of Gerbner, Mészáros, Methuku and Palmer \cite{gerbner2019generalized}, which can be proved using the graph removal lemma.

\begin{proposition}[Gerbner, Mészáros, Methuku and Palmer \cite{gerbner2019generalized}]\label{proposition_rbHH}
	For any graph $H$ on $r$ vertices, we have $\ex(n,H,\textnormal{rainbow-}H)=o(n^{r-1})$.
\end{proposition}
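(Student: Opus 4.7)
My plan is to argue by contradiction, sharpening the trivial $O(n^{r-1})$ double-counting bound to $o(n^{r-1})$ via a pigeonhole reduction followed by the graph removal lemma. Suppose that for some $\epsilon>0$ there are arbitrarily large $n$ and properly edge-coloured graphs $G$ on $n$ vertices with no rainbow copy of $H$ but at least $\epsilon n^{r-1}$ copies of $H$.

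First, because the colouring is proper and no copy of $H$ is rainbow, every copy of $H$ in $G$ must contain two vertex-disjoint same-coloured edges. Since $H$ has only $O_H(1)$ ordered pairs of disjoint edges, I can pigeonhole on which pair $(f_1,f_2)$, say $f_1=ab$ and $f_2=cd$, is a witness pair for at least $c_1 n^{r-1}$ of the copies, in the sense that $\phi(f_1)$ and $\phi(f_2)$ are same-coloured. Then, since each ordered 4-tuple extends to at most $O(n^{r-4})$ copies of $H$, I obtain at least $c_2 n^3$ ordered 4-tuples $(x_1,x_2,x_3,x_4) \in V(G)^4$ with $x_1x_2$ and $x_3x_4$ edges of the same colour that extend to a copy of $H$ with $f_1 \mapsto x_1x_2$ and $f_2 \mapsto x_3x_4$.

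For the removal-lemma step I would consider an auxiliary graph $F$ on $2r-4$ vertices obtained by gluing two vertex-disjoint copies $H^{(1)},H^{(2)}$ of $H$ along the four vertices of $f_1 \cup f_2$, swapping the roles of $f_1$ and $f_2$ between the copies. Averaging over the witnessed 4-tuples and their extensions on each side should produce $\Omega(n^{|V(F)|})$ copies of $F$ in $G$; the removal lemma for $F$ then yields an $o(n^2)$ edge deletion that eliminates every copy of $F$. The plan is then to argue that this small edge deletion must eliminate too many of the $\epsilon n^{r-1}$ copies of $H$ (each of which sits on a witnessing monochromatic pair belonging to many $F$-copies), producing a contradiction, or alternatively to extract from the forced structure a rainbow copy of $H$ outright.

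The main obstacle is the last step: the removal lemma is an uncoloured statement and one has to track the monochromaticity of the witness pair $(x_1x_2, x_3x_4)$ faithfully through the argument. I expect the cleanest implementation, presumably the one of Gerbner, Mészáros, Methuku and Palmer, is to apply the removal lemma to a judiciously chosen uncoloured auxiliary graph whose copies in $G$ correspond bijectively to monochromatic witness configurations extending to $H$-copies, so that the counting can be done without any explicit recolouring argument.
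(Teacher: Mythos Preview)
The paper does not prove this proposition itself (it is quoted from \cite{gerbner2019generalized}), but the argument is subsumed by the paper's Proposition~\ref{proposition_rbsharp}, and comparing your sketch to that proof shows where your plan goes astray.

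Your first two steps are exactly right and match the paper's approach: pigeonhole on the ``colour pattern'' of the copy (in particular on the position of a repeated-colour pair $f_1=ab,\ f_2=cd$), obtaining $\Omega(n^{r-1})$ copies all witnessed at the same pair. The divergence is in how you invoke the removal lemma. Your glued auxiliary graph $F$ on $2r-4$ vertices is a dead end: you have no control over whether a monochromatic 4-tuple that extends to a copy of $H$ one way also extends the other way, so there is no reason to expect $\Omega(n^{|V(F)|})$ copies of $F$; and even if there were, the removal lemma does not yield an $o(n^2)$ edge deletion in that regime --- it gives a useful conclusion only when the number of copies is $o(n^{|V(F)|})$. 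So the step ``the removal lemma for $F$ then yields an $o(n^2)$ edge deletion'' is simply false as stated.

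The paper's route is both simpler and avoids the colouring issue you flag. Apply the removal lemma to $H$ \emph{itself}: the easy $O(n^{r-1})$ bound (which you already have) is $o(n^{|V(H)|})$, so one can delete $o(n^2)$ edges of $G$ to destroy every copy of $H$. Now count how many of your $\Omega(n^{r-1})$ pigeonholed copies a single deleted edge can lie in. For any edge $e$ of $H$, at least two of $a,b,c,d$ lie outside $e$; removing one such vertex leaves a set of $r-1$ vertices of $H$ that determines the whole copy (the missing vertex is forced by properness and the repeated colour). Hence each edge of $G$ lies in at most $O(n^{r-3})$ of the pigeonholed copies, so the $o(n^2)$ deleted edges destroy only $o(n^{r-1})$ of them, a contradiction. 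This is precisely the ``$c$-spanning set containing $e$'' mechanism of Proposition~\ref{proposition_rbsharp}, and it replaces your auxiliary-graph step entirely.
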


In particular, we know that $\ex(n,K_r,\textnormal{rainbow-}K_r)=o(n^{r-1})$. We would like to match this with a lower bound of the form $n^{r-1-o(1)}$.

Before we prove such a bound, let us briefly discuss the ideas that underlie the proof. It is easy to show that a lower bound $\ex(n,K_4,\textnormal{rainbow-}K_4)\geq n^{3-o(1)}$ would imply that $\ex(n,K_r,\textnormal{rainbow-}K_r)\geq n^{r-1-o(1)}$ for all $r\geq 4$, so it suffices to consider the case $r=4$. However, when $r=4$ and $G$ is a properly edge-coloured graph with no rainbow $K_4$, then every triangle of $G$ is contained in at most three copies of $K_4$. (Indeed, if the vertices of the $K_3$ are $x,y,z$, then the only way that adding a further vertex $w$ can lead to a non-rainbow $K_4$ is if $wx$ has the same colour as $yz$, $wy$ has the same colour as $xz$ or $wz$ has the same colour as $xy$. But since the edge-colouring is proper, we cannot find more than one $w$ such that the same one of these three events occurs.) So it is natural to expect that our construction for Theorem \ref{theorem_KrKs} is relevant here.

To see how a similar construction gives the desired result, it is helpful, as earlier, to look at a simpler continuous example that serves as a guide to the construction. Consider the graph where the vertex set is $S^d$ and two unit vectors $v,w$ are joined if and only if $\langle v,w\rangle=-1/3$ (the angle between vectors that go through the origin and two distinct vertices of a regular tetrahedron). Then any $K_4$ in this graph must be given by the vertices of a regular tetrahedron. We colour an edge by the line that joins the origin to the midpoint of that edge. This is a proper colouring with the property that opposite edges have the same colour, so each $K_4$ is 3-coloured in this colouring. The construction we are about to describe is a suitable perturbation and discretization of this one.

For the discretized graph, we will again have `near-regular' tetrahedra forming $K_4$s. To ensure that each copy of $K_4$ is still rainbow, we shall have to modify the colouring slightly. We shall take only certain `allowed lines' as colours, and we shall colour an edge by the allowed line that is closest to the line through the midpoint (if that line is not very far -- otherwise we delete the edge). We need to choose the allowed lines in such a way that no two allowed lines are close (so that near-regular tetrahedra are still 3-coloured), but a large proportion of lines are close to an allowed line (so that not too many edges are deleted). This can be achieved using the following lemma.

\begin{lemma}\label{lemma_directions}
	There exists $\delta>0$ with the following property. For any $0< c_1< 1$ we can choose $L\geq (\delta/c_1)^d$ points $q_1,\dots,q_L$ on $S^d$ such that $\Vert q_i-\epsilon q_j\Vert \geq 3c_1$ for any $i\not =j$ and any $\epsilon\in\{1,-1\}$.
\end{lemma}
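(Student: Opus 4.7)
The natural approach is a greedy packing argument combined with the volume estimates of Lemma \ref{lemma_spherebounds}. The plan is as follows.

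I would choose the points $q_1, q_2, \ldots$ greedily. At each stage, having already selected $q_1, \ldots, q_k$, I pick any $q_{k+1} \in S^d$ satisfying $\|q_{k+1} - \epsilon q_i\| \geq 3c_1$ for every $i \leq k$ and every $\epsilon \in \{1,-1\}$. I stop when no such choice is possible, and call the resulting list $q_1, \ldots, q_L$. The condition on the $q_i$ is then satisfied automatically.

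To get a lower bound on $L$, observe that by maximality every point of $S^d$ lies within distance $3c_1$ of some $\epsilon q_i$ with $\epsilon \in \{\pm 1\}$. Hence the $2L$ open balls of radius $3c_1$ centred at the points $\pm q_i$ cover $S^d$. By Lemma \ref{lemma_spherebounds} each such ball has $\mu$-measure at most $B^d (3c_1)^d$, so
\[
1 \leq 2L \cdot B^d (3c_1)^d,
\]
which rearranges to $L \geq \tfrac{1}{2}(6Bc_1)^{-d} \cdot 2$, i.e. $L \geq (6Bc_1)^{-d}/2$. Choosing $\delta$ to be a small enough absolute constant (e.g. any $\delta \leq 1/(12B)$ will absorb the factor of $1/2$ for all $d \geq 1$, and the case $d=0$ is trivial), we obtain $L \geq (\delta/c_1)^d$ as required.

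There is no serious obstacle here: the argument is a standard sphere-packing/covering dichotomy, and the only subtlety is remembering to include the antipodes so as to forbid the $\epsilon = -1$ case as well, which merely costs a factor of $2$ in the final count and can be absorbed into $\delta$.
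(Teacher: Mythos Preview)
Your proof is correct and essentially identical to the paper's: both take a maximal (greedily chosen) set, observe that the balls of radius $3c_1$ about the $2L$ points $\pm q_i$ then cover $S^d$, and apply the upper bound from Lemma~\ref{lemma_spherebounds} to deduce $2L(Bc_1)^d\geq 1$ (up to absorbing constants into $B$). The only blemish is a harmless arithmetic slip---from $1\leq 2L\,B^d(3c_1)^d$ you get $L\geq \tfrac12(3Bc_1)^{-d}$, not $(6Bc_1)^{-d}/2$---but your choice $\delta\leq 1/(12B)$ still works since $4^d/2\geq 1$ for all $d\geq 1$.
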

\begin{proof}
	Take a maximal set of points satisfying the condition above. Then the balls of radius $3c_1$ around the points $\pm q_1,\dots,\pm q_L$ cover the entire sphere. But any such ball covers a proportion of surface area at most $(Bc_1)^d$ for some constant $B$ (by Lemma \ref{lemma_spherebounds}). Therefore $2L(Bc_1)^d\geq 1$, which gives the result.
\end{proof}

One can prove Theorem \ref{theorem_rainbow} using the method described above. However, the proof naturally yields the more general Theorem \ref{theorem_rbgeneral} (which is restated below), so that is what we shall do. Essentially, we can prove a lower bound of $n^{m-o(1)}$ for a graph $H$ whenever we can draw $H$ in $\RR^m$ in such a way that for each colour there is a line through the origin meeting (the line of) each edge of that colour, and the vertices of the graph span $\RR^m$.

\rainbowgeneral*

\begin{proof}
	Passing to a subspace, we may assume that $m=m_0$ and $\{p_v: v\in V(H)\}$ spans $\RR^m$. Furthermore, by rescaling we may assume that each $z_\kappa$ and each non-zero $p_v$ has unit length. Write $V_0=\{v\in V(H): p_v=0\}$ and $V_1=\{v\in V(H): p_v\not =0\}$. For each $c>0$ and any two positive integers $N,d$, we define a (random) graph $F_{N,d,c}$ as follows. The vertex set of $F_{N,d,c}$ has $|H|$ parts labelled by the vertices of $H$. If $v\in V_0$ then there is a single point $x_{v,1}=0$ in the part labelled by $v$. If $v\in V_1$, then we pick (uniformly and independently at random) $N$ points $x_{v,1},\dots,x_{v,N}$ on $S^d$: these will be the vertices in the part labelled by $v$. We join two vertices $x_{v,a}$ and $x_{w,b}$ by an edge if and only if $vw\in E(H)$ and $|\langle x_{v,a},x_{w,b}\rangle-\langle p_v,p_w\rangle|<c$.

	By assumption, we know that for each edge $vw$ of colour $\kappa$ there exist $\lambda_{\kappa,v}, \lambda_{\kappa,w}$ non-zero real coefficients such that $z_\kappa=\lambda_{\kappa,v}p_v+\lambda_{\kappa,w}p_w$.	
	Let $\lambda$ be the minimum and $M_0$ the maximum over all values of $|\lambda_{\kappa,v}|$. Write $c_1=(12M_0^2c)^{1/2}$. Form a new graph $F_{N,d,c}'$ out of $F_{N,d,c}$ by removing any vertex $x_{v,i}$ for which there is another vertex $x_{v,j}$ (with $j\not =i$) such that $\Vert x_{v,i}-x_{v,j}\Vert \leq \frac{2}{\lambda}c_1$. (The exact values of the constants are not particularly important -- they were chosen so that the graph described below will be properly coloured with no rainbow $K_r$. That is, we could replace $12M_0^2$ and $2/\lambda$ by other sufficiently large constants.)
	
	Let $q_1,\dots, q_L$ be points on $S^d$ with $L\geq (\delta/c_1)^d$ such that $\Vert q_i-q_j\Vert \geq 3c_1$ for all $i\not =j$. Here $\delta$ is some positive (absolute) constant, and the existence of such a set follows from Lemma \ref{lemma_directions}. Also, pick independently and uniformly at random a rotation $R_\kappa\in \so(d+1)$ for each colour $\kappa$ used in the edge-colouring of $H$.
	The probability measure we use on $\so(d+1)$ is the usual (Haar) measure, so for any $q\in S^d$ the points $R_\kappa q$ are independently and uniformly distributed on $S^d$. We think of the points $R_\kappa q_l$ ($l=1,\dots,L$) as the allowed colours for the edges $x_{v,i}x_{w,j}$ when $vw\in E(H)$ has colour $\kappa$ (and we take different rotations for different colours to have independence).
	
	We form an edge-coloured graph $F_{N,d,c}''$ from $F_{N,d,c}'$ as follows. For any edge $x_{v,i}x_{w,j}$ of $F_{N,d,c}'$, we perform the following modification. Let $\kappa$ be the colour of $vw$ in $E(H)$, and let $\lambda_{\kappa,v}, \lambda_{\kappa,w}\not =0$ be as before, so that $z_\kappa=\lambda_{\kappa,v}p_v+\lambda_{\kappa,w}p_w$.
	\begin{itemize}
		\item If there is some $l$ with $\Vert \lambda_{\kappa,v}x_{v,i}+\lambda_{\kappa,w}x_{w,j}-R_\kappa q_l\Vert <c_1$, then we colour the edge $x_{v,i}x_{w,j}$ with colour $(\kappa, l)$. Note that such an $l$ must be unique since $\Vert R_\kappa q_l-R_\kappa q_{l'}\Vert \geq 3c_1$ if $l'\not =l$.
		\item Otherwise we delete the edge $x_{v,i}x_{w,j}$.
	\end{itemize}

\textbf{Claim 1.} The edge-colouring of $F_{N,d,c}''$ is proper.

\textbf{Proof.} Suppose that $x_{v,i}x_{w,j}$ and $x_{v,i}x_{w',j'}$ are both edges with colour $(\kappa, l)$. Then $vw$ and $vw'$ both have colour $\kappa$ in $E(H)$, thus $w=w'$. Also,
\begin{align*}
\Vert x_{w,j'}-x_{w,j}\Vert &\leq \frac{1}{|\lambda_{\kappa,w}|}\left(\Vert \lambda_{\kappa,v}x_{v,i}+\lambda_{\kappa,w}x_{w,j'}-R_\kappa q_l\Vert + \Vert \lambda_{\kappa,v}x_{v,i}+\lambda_{\kappa,w}x_{w,j}-R_\kappa q_l\Vert\right)\\
&\leq \frac{1}{|\lambda_{\kappa,w}|}2c_1\\
&\leq \frac{2}{\lambda}c_1.
\end{align*}
But then $j=j'$ by the definition of $F_{N,d,c}'$. So the edge-colouring of $F_{N,d,c}''$ is indeed proper.\qed\medskip

\textbf{Claim 2.} There is no rainbow copy of $K_r$ in $F_{N,d,c}''$.

\textbf{Proof.} Suppose that the vertices $x_{v_1,i_1},\dots,x_{v_r,i_r}$ form a $K_r$ in $F_{N,d,c}''$. Then $v_1, \dots, v_r$ form a $K_r$ in $H$. This $K_r$ is not rainbow (by assumption). By symmetry, we may assume that the edges $v_1v_2$ and $v_3v_4$ both have colour $\kappa$. Write $x_{a}$ for $x_{v_a,i_a}$ and $\lambda_a$ for $\lambda_{\kappa,v_a}$ for $a=1,2,3,4$. Then we have (recalling that $M_0=\max_{\kappa',v}{|\lambda_{\kappa',v}|}$)
\begin{align*}
\Vert \lambda_1x_1+\lambda_2x_2&-\lambda_3x_3-\lambda_4x_4\Vert^2\\
&=\langle \lambda_1x_1+\lambda_2x_2-\lambda_3x_3-\lambda_4x_4,\lambda_1x_1+\lambda_2x_2-\lambda_3x_3-\lambda_4x_4\rangle\\
&=\sum_{a=1}^{4}\lambda_a^2\Vert x_a\Vert^2+2\lambda_1\lambda_2\langle x_1,x_2 \rangle-2\lambda_1\lambda_3\langle x_1,x_3\rangle-2\lambda_1\lambda_4\langle x_1,x_4\rangle\\
&\hspace{2cm}-2\lambda_2\lambda_3\langle x_2,x_3\rangle-2\lambda_2\lambda_4\langle x_2,x_4\rangle+2\lambda_3\lambda_4\langle x_3,x_4\rangle\\
&\leq \sum_{a=1}^{4}\lambda_a^2\Vert p_{v_a}\Vert^2 +2\lambda_1\lambda_2\langle p_{v_1},p_{v_2} \rangle-2\lambda_1\lambda_3\langle p_{v_1},p_{v_3}\rangle-2\lambda_1\lambda_4\langle p_{v_1},p_{v_4}\rangle\\
&\hspace{2cm}-2\lambda_2\lambda_3\langle p_{v_2},p_{v_3}\rangle-2\lambda_2\lambda_4\langle p_{v_2},p_{v_4}\rangle+2\lambda_3\lambda_4\langle p_{v_3},p_{v_4}\rangle+12M_0^2c\\
&=\langle \lambda_1p_{v_1}+\lambda_2p_{v_2}-\lambda_3p_{v_3}-\lambda_4p_{v_4}, \lambda_1p_{v_1}+\lambda_2p_{v_2}-\lambda_3p_{v_3}-\lambda_4p_{v_4}\rangle+12M_0^2c\\
&=12M_0^2c.
\end{align*}

Since $c_1=(12M_0^2c)^{1/2}$, we get that $\Vert \lambda_1x_1+\lambda_2x_2-\lambda_3x_3-\lambda_4x_4\Vert\leq c_1$. But if $x_1x_2$ has colour $(\kappa,l)$ and $x_3x_4$ has colour $(\kappa,l')$, then 
\[\Vert q_l-q_{l'}\Vert\leq \Vert \lambda_1x_1+\lambda_2x_2-R_\kappa q_l\Vert+\Vert \lambda_3x_3+\lambda_4x_4-R_\kappa q_{l'}\Vert+\Vert \lambda_1x_1+\lambda_2x_2-\lambda_3x_3-\lambda_4x_4\Vert< 3c_1.\] 
It follows that $l=l'$ and hence the $K_r$ with vertices $x_{v_1,i_1},\dots,x_{v_r,i_r}$ is not rainbow.\qed

\textbf{Claim 3.} The expected number of copies of $H$ in $F_{N,d,c}''$ is at least $N^{m-o(1)}$ if $d$ and $c$ are chosen appropriately.

\textbf{Proof.}  Pick arbitrary vertices $x_{v,i_v}$ in the classes (with $i_v=1$ if $v\in V_0$ and $1\leq i_v\leq N$ otherwise). We consider the probability that they form a copy of $H$ in $F_{N,d,c}''$. Write $x_v$ for $x_{v,i_v}$.

Let $\epsilon>0$ be a small constant to be specified later. 
By Lemma \ref{lemma_probabilityKs}, we have
\begin{equation}\label{eq_innerproductsgood}
\PP[|\langle x_v,x_w\rangle-\langle p_v,p_w\rangle|<\epsilon c\textnormal{ for all $v,w\in V(H)$}]\geq \alpha^d(\epsilon c)^{d(|V_1|-m)/2+h}
\end{equation}
for some constants $\alpha>0$ and $h$.

Let $v\in V_1$. By Lemma \ref{lemma_spherebounds}, the probability that $x_v$ is removed when we form $F_{N,d,c}'$ is at most $NB_1^dc^{d/2}$, for some constant $B_1>0$ that does not depend on $N,d,c$. By independence, if $NB_1^dc^{d/2}<1$ then
\begin{equation}\label{eq_notremoved}
\PP[\textnormal{none of the $x_v$ are removed when we form $F_{N,d,c}'$}]\geq (1-NB_1^dc^{d/2})^{|V_1|}.
\end{equation}
Finally, for each colour $\kappa$ in the colouring of $E(H)$, pick an edge $v_\kappa w_\kappa$ of that colour in $H$. Write $y_\kappa=\lambda_{\kappa,v_\kappa}x_{v_\kappa}+\lambda_{\kappa,w_\kappa}x_{w_\kappa}$ and $y_\kappa'=\frac{y_\kappa}{\Vert y_\kappa\Vert}$. Note that $\Vert y_\kappa\Vert\not =0$ with probability 1, since all $\lambda_{\kappa,v}$ are non-zero and at least one of $p_{v_\kappa}$ and $p_{w_{\kappa}}$ is non-zero. For each $\kappa$, if $\epsilon$ is sufficiently small then by Lemma \ref{lemma_spherebounds} we have 
\begin{equation}\label{eq_colourgood}
\PP[\textnormal{there is some $l_\kappa$ such that $\Vert y_\kappa'-R_\kappa q_{l_\kappa}\Vert <\epsilon c^{1/2}$}]\geq L\eta^d(\epsilon c^{1/2})^d\geq \eta_1^d\epsilon^d
\end{equation}
for some constants $\eta,\eta_1>0$.

Observe that the events in \eqref{eq_innerproductsgood}, \eqref{eq_notremoved} and \eqref{eq_colourgood} (for all $\kappa$) are independent. It follows that 
\begin{equation}\label{eq_allhold}
\PP[\textnormal{the events in \eqref{eq_innerproductsgood}, \eqref{eq_notremoved}, and, for all $\kappa$, \eqref{eq_colourgood} hold}]\geq \gamma_\epsilon^dc^{d(|V_1|-m)/2+h}(1-NB_1^dc^{d/2})^{|V_1|}
\end{equation}
where $\gamma_\epsilon$ is some constant depending on $\epsilon$ (but not on $N,d,c$). We show that these events together imply that the $x_v$ form a copy of $H$, if $\epsilon$ is sufficiently small. The only property that we need to check is that no edge is removed when $F_{N,d,c}''$ is formed out of $F_{N,d,c}'$. Consider then an edge $uu'$ of $H$. Let $\kappa$ be its colour and write $v=v_\kappa, w=w_\kappa, y=y_\kappa, y'=y_\kappa', \lambda_v=\lambda_{\kappa,v}, \lambda_w=\lambda_{\kappa,w}, \lambda_u=\lambda_{\kappa,u}$, and $\lambda_{u'}=\lambda_{\kappa,u'}$. We have
\begin{align*}
\langle y,y\rangle
&=\langle \lambda_vx_v+\lambda_wx_w,\lambda_vx_v+\lambda_wx_w\rangle\\
&=\langle \lambda_vp_v+\lambda_wp_w,\lambda_vp_v+\lambda_wp_w\rangle+O(\epsilon c)\\
&=\langle z_\kappa,z_\kappa\rangle+O(\epsilon c)\\
&=1+O(\epsilon c).
\end{align*}
So
\[\Vert y-y'\Vert =|\Vert y\Vert-1|=O(\epsilon c).\]
Furthermore, if we write $y''=\lambda_ux_u+\lambda_{u'}x_{u'}$, then
\begin{align*}
\langle y-y'',y-y''\rangle
&=\langle \lambda_vx_v+\lambda_wx_w-\lambda_ux_u-\lambda_{u'}x_{u'}, \lambda_vx_v+\lambda_wx_w-\lambda_ux_u-\lambda_{u'}x_{u'} \rangle\\
&=\langle \lambda_vp_v+\lambda_wp_w-\lambda_up_u-\lambda_{u'}p_{u'}, \lambda_vp_v+\lambda_wp_w-\lambda_up_u-\lambda_{u'}p_{u'} \rangle+O(\epsilon c)\\
&=\langle z_\kappa-z_\kappa,z_\kappa-z_\kappa\rangle +O(\epsilon c)\\
&=O(\epsilon c).
\end{align*}
It follows that
\begin{align*}
\Vert y''-R_\kappa q_{l_\kappa}\Vert
&\leq \Vert y''-y\Vert +\Vert y-y'\Vert +\Vert y'-R_\kappa q_{l_\kappa}\Vert\\
&\leq O((\epsilon c)^{1/2})+O(\epsilon c)+\epsilon c^{1/2}.
\end{align*}
This is indeed less than $c_1=(12M_0^2c)^{1/2}$ if $\epsilon$ is sufficiently small.\medskip

Choosing $\epsilon$ appropriately, $\eqref{eq_allhold}$ gives that the expected number of copies of $H$ in $F_{N,d,c}''$ is at least
\[N^{|V_1|}\gamma^dc^{(|V_1|-m)d/2+h}(1-NB_1^dc^{d/2})^{|V_1|}\]
for some constant $\gamma$. Letting $c=\left(\frac{1}{2NB_1^d}\right)^{2/d}$ and $d=\lfloor\sqrt{\log{N}}\rfloor$, we get that the expected number of copies of $H$ in $F_{N,d,c}''$ is at least $N^{m-o(1)}$, which proves the claim.\qed

The theorem follows from Claims 1, 2 and 3.
\end{proof}

\begin{proof}[Deduction of Theorem \ref{theorem_rainbow} from Theorem \ref{theorem_rbgeneral}]
Given a complete graph $K_r$ on vertex set $\{1,\dots,r\}$, we can properly edge-colour it by giving the edges $12$ and $34$ the same colour $\kappa$, and giving arbitrary different colours to the remaining edges. Pick $r-1$ linearly independent points $p_2, p_3, \dots, p_r$ in $\RR^{r-1}$, and let $p_1=p_2+p_3+p_4$. Let $z_\kappa=p_3+p_4=p_1-p_2$ and let $z_{\kappa'}=p_i+p_j$ when $ij$ is an edge of colour $\kappa'\not =\kappa$. Theorem \ref{theorem_rbgeneral} gives that $\ex(n,K_r,\textnormal{rainbow-}K_r)\geq n^{r-1-o(1)}$, and we have a matching upper bound by Proposition \ref{proposition_rbHH}.
\end{proof}

\section{Some applications of Theorem \ref{theorem_rbgeneral}}\label{sec_examples}

We have already seen that Theorem \ref{theorem_rbgeneral} can be used to answer the question of Gerbner, Mészáros, Methuku and Palmer about the order of magnitude of $\ex(n,K_r,\textnormal{rainbow-}K_r)$. In this section we give some other examples of applications of the theorem. 

To show that our lower bounds are sharp, we shall use a simple proposition to give matching upper bounds. This will require the following definition. Given a graph $H$ and a proper edge-colouring $c$ of $H$, we say that a subset $V_0\subseteq V(H)$ is a \textit{$c$-spanning set} if there is an ordering $v_1,\dots,v_k$ of the vertices in $V(H)\setminus V_0$ such that for all $i$ there are some $u,u',w\in V_0\cup\{v_1,\dots,v_{i-1}\}$ such that $uu'\in E(H)$, $v_iw\in E(H)$ and $c(uu')=c(v_iw)$. In other words, we can add the remaining vertices to $V_0$ one by one in a way that new vertices are joined to some vertex in the set by a colour already used.

\begin{proposition}\label{proposition_rbsharp}
Let $H$ and $F$ be graphs and let $r$ be a positive integer. Assume that for every proper edge-colouring $c$ of $H$ that does not contain a rainbow copy of $F$ there is a $c$-spanning set of size at most $r$. Then $\ex(n,H,\textnormal{rainbow-}F)=O(n^r)$. If we also have $r<|V(H)|$, and if for every such $c$ and every edge $e$ of $H$ there is a $c$-spanning set of size at most $r$ containing $e$, then $\ex(n,H,\textnormal{rainbow-}F)=o(n^r)$.
\end{proposition}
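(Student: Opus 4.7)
The plan is to count copies of $H$ in a properly edge-coloured graph $G$ on $n$ vertices with no rainbow copy of $F$, by observing that any embedding of $H$ is essentially determined by the image of a small spanning set.

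For the $O(n^r)$ bound, any copy $\phi\colon V(H)\hookrightarrow V(G)$ pulls back to a proper edge-colouring $c$ of $H$ with no rainbow $F$; since $H$ is fixed, there are only finitely many such $c$ up to relabelling of colours, and it suffices to bound the number of copies with pullback $c$ for each one. Pick a $c$-spanning set $V_c\subseteq V(H)$ of size at most $r$ together with the associated ordering $v_1,\dots,v_k$ of $V(H)\setminus V_c$. Once $\phi|_{V_c}$ is chosen (in at most $n^{|V_c|}\leq n^r$ ways), each successive $\phi(v_i)$ is forced: the spanning property supplies $u,u',w\in V_c\cup\{v_1,\dots,v_{i-1}\}$ with $c(uu')=c(v_iw)$, and since $G$ is properly edge-coloured there is at most one vertex of $G$ adjacent to $\phi(w)$ by an edge of the same colour as $\phi(u)\phi(u')$. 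Summing over the finitely many $c$ gives $O(n^r)$.

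For the $o(n^r)$ bound I would upgrade this to an application of the graph removal lemma. Assume for contradiction that some $\delta>0$ admits arbitrarily large admissible $G$ with at least $\delta n^r$ copies of $H$, and fix by pigeonholing a single $c$ accounting for at least $\delta'n^r$ copies with pullback $c$. For any edge $e$ of $H$, take a $c$-spanning set $V_c^e\subseteq V(H)$ of size exactly $r$ containing $e$ (equality may be assumed, since a smaller spanning set gives $O(n^{r-1})$ by the first-paragraph argument). Because $r<|V(H)|$ there exists $v_1\in V(H)\setminus V_c^e$, and the forcing step for $v_1$ produces a pair of edges $\phi(v_1 w_1)$ and $\phi(u_1 u_1')$ of $G$ that must share a colour. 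Thus each of the $\delta'n^r$ copies of $H$ yields a distinct labelled coloured configuration on $r+1$ vertices of $G$ realising a fixed colour-coincidence pattern.

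Treating these as copies of a fixed edge-coloured graph on $r+1\leq|V(H)|$ vertices in $G$, I would invoke the edge-coloured version of the graph removal lemma (or reduce to the ordinary uncoloured removal lemma by encoding each colour class as an auxiliary bipartite structure) to conclude that the $\delta'n^r$ configurations cannot all survive a sub-quadratic edge deletion; combined with the uniqueness of the extension established in the first part, this contradicts the count of $\delta'n^r$ copies of $H$. The main obstacle I expect is setting up the auxiliary uncoloured structure cleanly enough for the ordinary removal lemma to preserve the required colour-coincidence information --- a standard but delicate encoding step, and one that must interact properly with both the properness of the colouring and the forbidden-rainbow-$F$ constraint.
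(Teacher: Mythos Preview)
Your argument for the $O(n^r)$ bound is correct and is essentially the paper's proof.

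For the $o(n^r)$ bound, however, you have taken an unnecessarily complicated route and left a real gap. The paper's argument is much more direct and avoids any coloured removal lemma or auxiliary encoding. Having already established that $G$ contains $O(n^r)=o(n^{|V(H)|})$ copies of $H$ (using $r<|V(H)|$), one applies the \emph{ordinary uncoloured} graph removal lemma to $H$ in $G$: there is a set $E_0\subseteq E(G)$ with $|E_0|=o(n^2)$ meeting every copy of $H$. Now the second hypothesis is used directly: for a fixed pullback colouring $c$, a fixed edge $vw\in E(H)$, and a fixed edge $ab\in E_0$, the number of embeddings $\phi$ with pullback $c$ and $\phi(v)=a$, $\phi(w)=b$ is at most $n^{r-2}$, because a $c$-spanning set of size $\leq r$ containing $\{v,w\}$ determines $\phi$ once the images of its remaining $\leq r-2$ vertices are chosen. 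Summing over the $o(n^2)$ edges in $E_0$, over $vw\in E(H)$, and over the finitely many colourings $c$ gives $o(n^r)$.

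Your proposed detour through $(r+1)$-vertex colour-coincidence configurations runs into two problems. First, the removal step you need is genuinely a coloured statement, and your ``encode the colours as bipartite structure'' reduction is left entirely unspecified; it is not as routine as you suggest, since the colour set can have size $\Theta(n)$. Second, even granting such a removal step, your final contradiction needs each deleted edge to lie in only $O(n^{r-2})$ configurations, but an edge incident to $\phi(v_1)$ fixes $\phi(v_1)$ and one vertex of $\phi(V_c^e)$, leaving $r-1$ vertices of $V_c^e$ apparently free; the forcing relation goes the wrong way to recover the missing factor of $n$. The insight you are missing is that the removal lemma should be applied to $H$ itself, with the colouring entering only in the per-edge count.
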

\begin{proof}
	Let $G$ be a graph on $n$ vertices and let $\kappa$ be a proper edge-colouring of $G$ without a rainbow copy of $F$. Let $G$ contain $M$ copies of $H$. Then we can partition the vertices into classes $X_v$ for $v\in V(H)$ in such a way that there are $\Omega(M)$ choices of $\mathbf{x}=(x_v)_{v\in V(H)}$ such that $x_v\in X_v$ and $v\mapsto x_v$ is a graph homomorphism from $H$. (To see this, place each vertex independently, uniformly at random into one of the classes. If $\{x_v: v\in V(H)\}$ is an isomorphic copy of $H$ in $G$ (such that $v\mapsto x_v$ is the corresponding isomorphism), then we have $\PP[x_v\in X_v\textnormal{ for all $v$}]=1/|V(H)|^{|V(H)|}$, so the expected number of such tuples $\mathbf{x}$ is $M/|V(H)|^{|V(H)|}=\Omega(M)$.)
	
	For each $\mathbf{x}$ as above pick an isomorphic proper edge-colouring $c_{\mathbf{x}}: E(H)\to \{1,\dots, |E(H)|\}$, that is, $c_{\mathbf{x}}(vw)=c_{\mathbf{x}}(v'w')$ if and only if $\kappa(x_vx_w)=\kappa(x_{v'}x_{w'})$ for all edges $vw,v'w'$ of $H$. Note that $c_\mathbf{x}$ cannot contain a rainbow copy of $F$. Then there is a colouring $c: E(H)\to \{1,\dots, |E(H)|\}$ that appears for $\Omega(M)$ choices of $\mathbf{x}$. Let $V_0$ be a $c$-spanning set of size at most $r$.
	
	Note that any $\mathbf{x}$ with $c_\mathbf{x}=c$ is determined by $(x_v)_{v\in V_0}$, since the edge-colouring is proper. But there are $O(n^r)$ choices for $(x_v)_{v\in V_0}$, hence $M=O(n^r)$.
	
	Now assume that $r<|V(H)|$ and that for every proper edge-colouring $c'$ of $H$ without a rainbow $F$ and every edge $e$ of $H$ there is a $c'$-spanning set of size at most $r$ that contains $e$. By the graph removal lemma and the first part of our proposition, we can remove $o(n^2)$ edges from $G$ so that the new graph $G'$ contains no copy of $H$. So it suffices to show that each edge appeared in at most $O(n^{r-2})$ tuples $\mathbf{x}$ with $c_\mathbf{x}=c$. Given an edge $e=y_vy_w$ with $y_v\in X_v, y_w\in X_w, vw\in E(H)$ we can pick in $H$ a $c$-spanning set $V_{0,e}$ of size at most $r$ containing $vw$. Then any $\mathbf{x}$ with $c_\textbf{x}=c$ and $x_v=y_v, x_w=y_w$ is determined by $(x_u)_{u\in V_0\setminus\{v,w\}}$, which gives the result.
\end{proof}

Now we give some sample applications of Theorem \ref{theorem_rbgeneral} and Proposition \ref{proposition_rbsharp}. We shall give two illustrations, but it is quite easy to generate additional examples.

\subsection{Complete graphs}\label{rbkrks}
Perhaps the most natural extension of Question \ref{question_rbKr} is to determine the behaviour of the function $\ex(n,K_r,\textnormal{rainbow-}K_s)$. Note that trivially $\ex(n,K_r,\textnormal{rainbow-}K_s)=\Theta(n^r)$ when $s>r$ (by taking a complete $r$-partite graph), and we have seen that $\ex(n,K_s,\textnormal{rainbow-}K_s)=n^{s-1-o(1)}$ (when $s\geq 4$). We also have $\ex(n,K_r,\textnormal{rainbow-}K_s)=0$ whenever $r\geq r_s$ for some integer $r_s$ depending on $s$. Indeed, if we have a $K_r$ with no rainbow copy of $K_s$, and the largest rainbow subgraph has order $t\leq s$, then any of the remaining $(r-t)$ vertices must be joined to this $K_t$ by one of the $\binom{t}{2}$ colours appearing in the $K_t$. But each such colour appears at most once at each vertex, giving $r=O(s^3)$. {In fact, Alon, Lefmann and Rödl showed \cite{alon1991anti} that $r_s=\Theta(s^3/\log{s})$.}

However, the question is non-trivial for $s< r< r_s$.
First note that $\ex(n,K_r,\textnormal{rainbow-}K_s)=o(n^{s-1})$ whenever $r\geq s$ by Proposition \ref{proposition_rbsharp} (since any maximal rainbow subgraph is a $c$-spanning set). The simplest case for the lower bound is $(r,s)=(5,4)$. In this case Theorem \ref{theorem_rbgeneral} gives a matching lower bound $n^{3-o(1)}$. Indeed, take an arbitrary proper edge-colouring of $K_5$ with no rainbow $K_4$, and take points $p_1,\dots,p_5$ in general position in $\RR^3$. The existence of appropriate values of $z_\kappa$ follows from the fact that any four of the $p_i$ are linearly dependent (but any three are independent), and each colour is used at most twice. It is easy to deduce that $\ex(n,K_{s+1},\textnormal{rainbow-}K_s)=n^{s-1-o(1)}$ for all $s\geq 4$.

When $s=4$ then $r_s=7$ (since any triangle is in at most one $K_4$), leaving the case $(r,s)=(6,4)$. 
Unfortunately, in this case Theorem \ref{theorem_rbgeneral} does not give a lower bound of $n^{3-o(1)}$. (To see this, observe that to get such a bound the corresponding points $p_v$ would all have to be non-zero. Then we can use the alternative formulation Theorem \ref{theorem_rbgeneral}$'$ to see that we would have to be able to draw a properly edge-coloured $K_6$ in the plane such that there is no rainbow $K_4$ and lines of edges of the same colour are either all parallel or go through the same point. Applying an appropriate projection and affine transformation, we may assume that we have two colour classes where the edges are all parallel, and these two parallel directions are perpendicular. This leaves essentially two cases to be checked, and neither of them yields an appropriate configuration.)

However, we can still deduce a lower bound of $\ex(n,K_6,\textnormal{rainbow-}K_4)\geq n^{12/5-o(1)}$, as sketched below. We can take 6 points $p_0=0$ and $p_a=e^{2\pi i a/5}$ (for $a=1,\dots,5$), that is, the vertices of a regular pentagon together with its centre. We define a colouring $c$ as follows. Give parallel lines between vertices of the pentagon the same colour, and also give the same colour to the edge incident at the centre which is perpendicular to these lines (see Figure \ref{K6K4}). This gives a proper edge-colouring of $K_6$ and corresponding points in 2 dimensions for which the conditions of Theorem \ref{theorem_rbgeneral} are satisfied, giving a lower bound of $n^{2-o(1)}$. (The point $z_\kappa$ is chosen to be $p_a$ when $p_0p_a$ has colour $\kappa$.) This can be improved to $n^{12/5-o(1)}$ by a product argument as follows. Looking at the construction, we see that our graph $G$ is $6$-partite with classes $V_0,\dots,V_5$, at most $n$ vertices, and a proper edge-colouring $\kappa$ such that the following hold.
\begin{itemize}
	\item There are (at least) $n^{2-o(1)}$ copies of $K_6$ in $G$.
	\item The class $|V_0|$ has size $1$.
	\item There is a $5$-colouring $c$ of the edges of $K_6$ (on vertex set $\{0,\dots,5\}$) with no rainbow $K_4$ such that whenever $v_{i_1},v_{i_2},v_{i_3},v_{i_4}$ form a $K_4$ in $G$ with $v_{i_j}\in V_{i_j}$, then $i_j\mapsto v_{i_j}$ gives an isomorphism of colourings between the restrictions of $c$ and $\kappa$ to the appropriate four-vertex graphs (i.e., $\kappa(v_{i_j}v_{i_l})=\kappa(v_{i_{j'}}v_{i_{l'}})$ if and only if $c(i_ji_{j'})=c(i_{j'}i_{l'})$). Moreover, this 5-colouring $c$ has the property that for all $i,j\in \{1,\dots,6\}$ there is a permutation of the vertices $\{0,\dots,5\}$ which is an automorphism of colourings and maps $i$ to $j$. (Indeed, we can take rotations of the pentagon when $i,j\not =0$, and we can take the permutation $(01)(34)$ when $i=0$, $j=1$.)

\end{itemize} 

We construct a new graph as follows. For each $i\in\{0,\dots,5\}$, pick a permutation $\pi_i$ of $\{0,\dots,5\}$ which gives a colouring automorphism of $c$ and sends $i$ to $0$. Define a $6$-partite graph $G_i$ obtained from $G$ by permuting the vertex classes: $G_i$ has classes $V_0^i,\dots,V_5^i$ given by $V_a^i=V_{\pi_i(a)}$ and same edge set as $G$. Let $G'$ be the product of these $6$-partite graphs, that is, it is $6$-partite with vertex classes $W_a=V_a^0\times V_a^1\times\dots\times V_a^5$, and two vertices $(v_0,\dots,v_5)\in W_a$ and  $(w_0,\dots,w_5)\in W_b$ are joined by an edge if $v_iw_i\in E(G)$ for all $i$. Moreover, colour such an edge by colour $(\kappa(v_0w_0),\dots,\kappa(v_5w_5))$. It is easy to check that the colouring is proper, $G'$ contains no rainbow $K_4$, $G'$ has at most $n^5$ vertices in each class, and $G'$ contains at least $n^{12-o(1)}$ copies of $K_6$, giving the bound stated.

\begin{figure}[h]
	\includegraphics[clip,trim=0.5cm 0.4cm 1cm 0.1cm,
	width=0.4\linewidth]{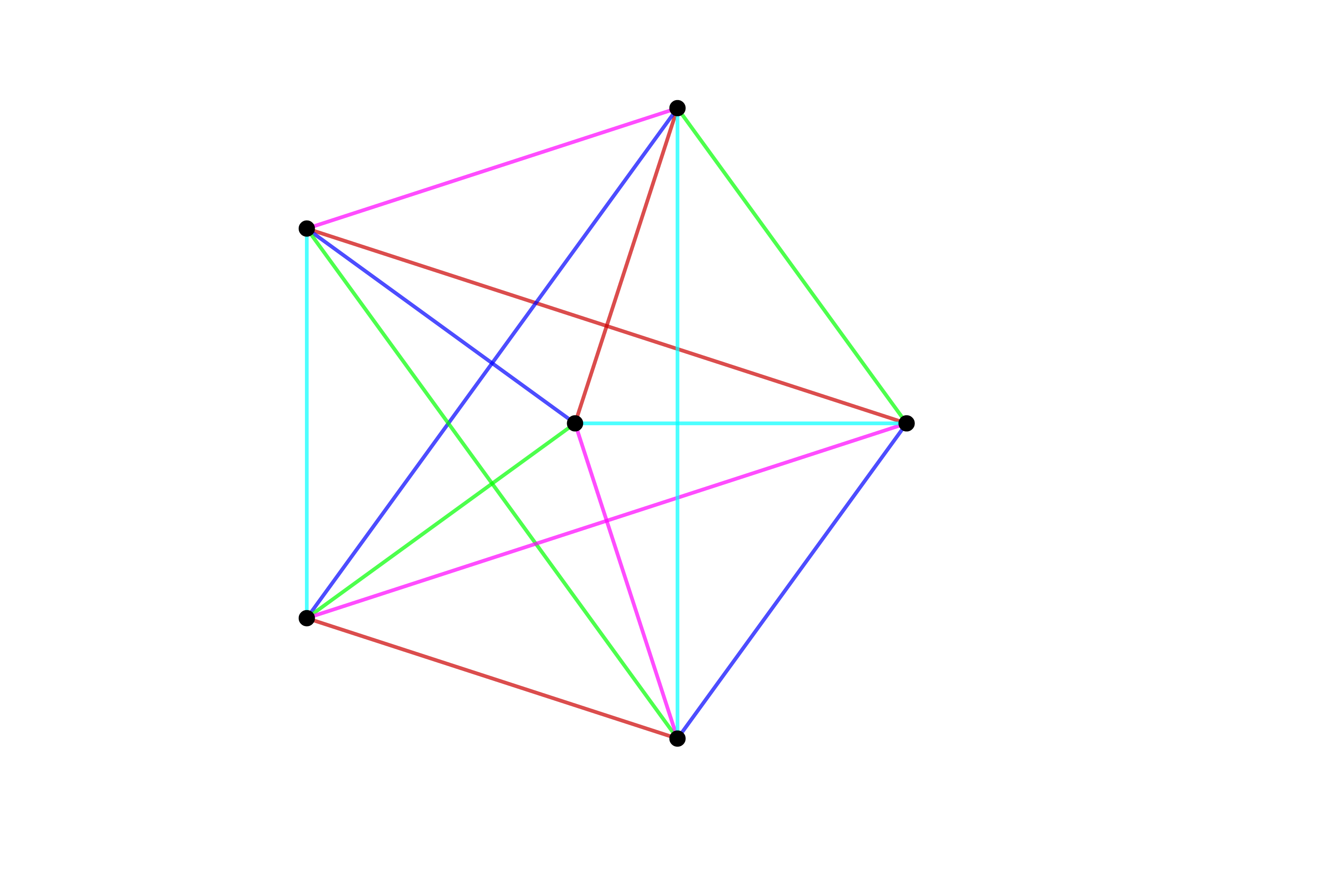}
	\centering
	\caption{The colouring and points used for $(r,s)=(6,4)$ to get a lower bound.}
	\label{K6K4}
\end{figure}

This leaves some open questions about $\ex(n,K_r,\textnormal{rainbow-}K_s)$. It would be interesting to determine its order of magnitude for $(r,s)=(6,4)$, or the magnitude for other pairs with $s<r<r_s$.

\subsection{King's graphs}

Given positive integers $k,l\geq 2$, write $H_{k,l}$ for the graph with vertex set $\{1,\dots,k\}\times\{1,\dots,l\}$ where $(a,b)$ and $(a',b')$ are joined by an edge if and only if they are distinct and $|a-a'|,|b-b'|\leq 1$. In other words, $H_{k,l}$ is the strong product of a path with $k$ points and a path with $l$ points, sometimes called the $k\times l$ king's graph. We can use our results to show that $\ex(n,H_{k,l},\textnormal{rainbow-}K_4)=n^{k+l-1-o(1)}$.

First consider the upper bound. It is easy to see that any sequence of vertices $p_1,\dots,p_{k+l-1}$ is a $c$-spanning set (for all proper edge-colourings $c$ of $H_{k,l}$ without a rainbow $K_4$) if either of the following statements holds.
\begin{enumerate}
	\item We have $p_1=(1,1)$, $p_{k+l-1}=(k,l)$ and $p_{i+1}-p_i\in\{(0,1),(1,0)\}$ for all $i$.
	\item We have $p_1=(1,l)$, $p_{k+l-1}=(k,1)$ and $p_{i+1}-p_i\in\{(0,1),(-1,0)\}$ for all $i$.
\end{enumerate}
(Indeed, this follows from the fact that we can add the other vertices one by one, creating a new copy of $K_4$ in our set in each step.) Since any edge is contained in such a sequence, Proposition \ref{proposition_rbsharp} gives $\ex(n,H_{k,l},\textnormal{rainbow-}K_4)=o(n^{k+l-1})$.

For the lower bound, consider an edge-colouring $c$ of $H_{k,l}$ with $c((a,b)(a+1,b))=a$, where the other edges are given arbitrary distinct colours. This gives a proper edge-colouring of $H_{k,l}$ with no rainbow $K_4$. For each vertex $(a,b)$ of $H$, define $p_{a,b}\in \RR^{k+l}$ to be the vector with $i$\thth coordinate 
\begin{equation*}
(p_{a,b})_i =
\begin{cases*}
0 & if 
$i\not =a, k+b$ \\
1 & if $i=a$\\
(-1)^a & if $i=k+b$
\end{cases*}
\end{equation*}
For each $1\leq a\leq k-1$ we let $z_a\in \RR^{k+l}$ be the vector with all entries zero except the $a$\thth and $(a+1)$\thth coordinates which are $1$, and for each other colour $\kappa$ used in the colouring of $H_{k,l}$ we take $z_\kappa=p_v+p_w$, where $vw$ is the unique edge of colour $\kappa$. Then we have $p_{a,b}+p_{a+1,b}=z_a$, so the conditions of Theorem \ref{theorem_rbgeneral} are satisfied. The dimension of the subspace of $\RR^{k+l}$ spanned by the vectors $p_{a,b}$ is at least $k+l-1$, since $p_{1,l},p_{1,l-1},\dots,p_{1,1},p_{2,1},p_{3,1},\dots,p_{k,1}$ are linearly independent. We get the required lower bound $n^{k+l-1-o(1)}$.

\section*{Acknowledgement} We are grateful to Shagnik Das for pointing out that the behaviour of $r_s$ (in Subsection \ref{rbkrks}) was known.

\bibliography{Bibliography}
\bibliographystyle{abbrv}	

\appendix
\section{Appendix}
In this appendix, we prove Lemma \ref{lemma_probabilityKs}, which is recalled below.
\appendixlemma*
	
\begin{lemma}\label{lemma_r+1points}
	Let $r$ be a positive integer. Let $p_1,\dots,p_{r+1}$ be points on $S^{r-1}$ such that $p_1,\dots, p_r$ are linearly independent. Then there exist real numbers $\delta>0$, $\alpha_0>0$ and $h_0$ such that whenever $d\geq r$ is a positive integer, $0<c<1$, and $x_1,\dots,x_r$ are points on $S^d$ with $|\langle x_i,x_j\rangle -\langle p_i, p_j\rangle |<\delta c$ for all $1\leq i,j\leq r$, then the probability that a random point $x_{r+1}$ on $S^d$ satisfies $|\langle x_i,x_{r+1}\rangle -\langle p_i, p_{r+1}\rangle|<c$ for all $i\leq r$ is at least $\alpha_0^dc^{d/2+h_0}$.
\end{lemma}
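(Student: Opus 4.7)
The plan is to reduce the target event to a geometric condition on the orthogonal projection of $x_{r+1}$ onto $V:=\mathrm{span}(x_1,\ldots,x_r)$, and then to compute the probability using the explicit density of that projection when $x_{r+1}$ is uniform on $S^d$.

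For $\delta$ sufficiently small, the Gram matrix $G_x=(\langle x_i,x_j\rangle)_{ij}$ lies within $O(\delta c)$ (entrywise) of the fixed invertible Gram matrix $G_p=(\langle p_i,p_j\rangle)_{ij}$, so $\dim V=r$ and $\|G_x^{-1}\|$ is bounded by a constant depending only on $p_1,\ldots,p_r$. Writing the random $x_{r+1}\in S^d$ as $u+w$ with $u\in V$ and $w\in V^\perp$, we have $\langle x_i,x_{r+1}\rangle=\langle x_i,u\rangle$, so the event depends only on $u$. Bounded conditioning of the linear map $u\mapsto(\langle x_i,u\rangle)_i$ then guarantees that the event contains a Euclidean ball $B(u_0,\gamma c)\subset V$, where $u_0$ is the unique solution of $\langle x_i,u_0\rangle=\langle p_i,p_{r+1}\rangle$ for all $i\leq r$ and $\gamma>0$ depends only on $p_1,\ldots,p_{r+1}$.

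Next, writing $u_0=\sum_j\alpha_j x_j$ and $p_{r+1}=\sum_j\lambda_j p_j$, both coefficient vectors satisfy the same linear system up to changing the Gram matrix from $G_x$ to $G_p$, so $\alpha=\lambda+O(\delta c)$, and consequently
\[\|u_0\|^2=\alpha^TG_x\alpha=\lambda^TG_p\lambda+O(\delta c)=\|p_{r+1}\|^2+O(\delta c)=1+O(\delta c).\]
Since $u_0$ may lie slightly outside the unit sphere of $V$, I would shift radially inwards to $u_1:=u_0-(2\gamma c/3)\,u_0/\|u_0\|$; for $\delta$ small enough in terms of $\gamma$ this gives $\|u_1\|\leq 1-\gamma c/2$, and every $u$ in the sub-ball $B(u_1,\gamma c/6)\subset B(u_0,\gamma c)$ then satisfies $1-\|u\|^2\geq \gamma c/3$.

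To finish, I would invoke the standard fact that for $X$ uniform on $S^d$ the projection $P_V(X)$ has density $c_{d,r}(1-\|u\|^2)^{(d-r-1)/2}$ on the open unit ball of $V$, with $c_{d,r}$ bounded below by a positive constant uniformly in $d\geq r$ (for $d\geq r+1$ this is immediate since the density is then at most $c_{d,r}$, bounding the normalising integral by the volume of the unit ball in $V$; for $d=r$ the density is bounded below by $c_{r,r}$, a fixed positive constant, using $(1-\|u\|^2)^{-1/2}\geq 1$). Lower-bounding the density on $B(u_1,\gamma c/6)$ using the estimate above and multiplying by its Euclidean volume $V_r(\gamma c/6)^r$, the target probability is at least a positive constant times $(\sqrt{\gamma/3})^d\,c^{(d+r-1)/2}$, which has the claimed form $\alpha_0^d c^{d/2+h_0}$ after absorbing fixed constants into $\alpha_0$ and taking $h_0$ depending only on $r$. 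The main subtlety is the coordination of constants in the geometric step: because $\|u_0\|$ typically slightly exceeds $1$, the inward shift must be chosen large enough to dominate the $O(\delta c)$ error in $\|u_0\|$ while leaving a sub-ball of radius $\Theta(c)$ inside both $B(u_0,\gamma c)$ and the domain of the density formula; the remaining computation is a routine normalisation.
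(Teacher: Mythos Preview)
Your argument is correct and takes a genuinely different route from the paper. The paper proceeds by induction on $r$: after rotating so that $x_1=p_1=(0,\dots,0,1)$, it normalises the projections of $x_2,\dots,x_{r+1}$ to the hyperplane orthogonal to $x_1$, applies the inductive hypothesis on $S^{d-1}$ to control the inner products $\langle x_a',x_{r+1}'\rangle$ for $2\le a\le r$, and combines this with an independent one-dimensional estimate (from Lemma~\ref{lemma_spherebounds}) for $\langle x_1,x_{r+1}\rangle$. You instead project directly onto the full $r$-dimensional span $V=\mathrm{span}(x_1,\dots,x_r)$ and invoke the explicit marginal density $c_{d,r}(1-\|u\|^2)^{(d-r-1)/2}$ of $P_V(x_{r+1})$.

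Your approach is shorter and gives an explicit value for $h_0$, at the price of importing the projection-density formula as a black box; the paper's approach is more self-contained (it uses nothing beyond the elementary Lemma~\ref{lemma_spherebounds}) and its inductive template is reused almost verbatim for the companion Lemma~\ref{lemma_indeppoints}, so there is some economy across the two lemmas. One small wrinkle in your write-up: the sentence ``the target probability is at least a positive constant times $(\sqrt{\gamma/3})^d\,c^{(d+r-1)/2}$'' is only literally correct for $d\ge r+1$; when $d=r$ the exponent $(d-r-1)/2=-1/2$ flips the inequality, your density lower bound becomes the constant $c_{r,r}$, and you get probability $\gtrsim c^{r}$ rather than $c^{r-1/2}$. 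This does not affect the conclusion (take $h_0=r/2$ instead of $(r-1)/2$, as your final clause already anticipates), but it would be cleaner to state the two cases separately.
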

\begin{proof}
	We prove the statement by induction on $r$. If $r=1$, then $p_1,p_2\in\{-1,1\}$ and the condition $|\langle x_i,x_{r+1}\rangle -\langle p_i, p_{r+1}\rangle|<c$ becomes $|\langle p_1p_2x_1,x_2\rangle-1|<c$, which is equivalent to $\Vert p_1p_2x_1-x_2\Vert <\sqrt{2c}$. By Lemma \ref{lemma_spherebounds}, this happens with probability at least $\alpha_0^dc^{d/2}$, giving the claim. (Here $\delta=1$ and $h_0=0$.)
	
	Now assume that $r\geq 2$ and the result holds for smaller values of $r$. We may assume that $p_{r+1}\not =\pm p_1$ (otherwise swap $p_1$ and $p_2$). By symmetry, we may assume that $x_1=(0,0,\dots,0,1)\in S^d$ and $p_1=(0,0,\dots,0,1)\in S^{r-1}$. Write $x_a^{i}$ for the $i$\thth coordinate of $x_a$. For each $2\leq a\leq r+1$, define a normalized projected vector
	\[x_a'=\frac{(x_a^{1},x_a^{2},\dots,x_a^{d})}{\Vert (x_a^{1},x_a^{2},\dots,x_a^{d})\Vert }\in S^{d-1}.\]
	Note that the denominator is non-zero for $a=2,\dots,r$ if $\delta$ is sufficiently small, and it is non-zero with probability 1 for $a=r+1$. Also, $x_{r+1}$ is uniformly distributed on $S^{d-1}$. Similarly, for each $2\leq a\leq r+1$, define
	\[p_a'=\frac{(p_a^{1},p_a^{2},\dots,p_a^{r-1})}{\Vert (p_a^{1},p_a^{2},\dots,p_a^{r-1})\Vert }\in S^{r-2}.\]
	 Note that $p_2', \dots, p_r'$ are linearly independent in $\RR^{r-1}$. 
	
	Note that for $2\leq a,b\leq r$ we have
	\[\langle x_a',x_b'\rangle=\frac{\langle x_a,x_b\rangle-\langle x_1,x_a\rangle\langle x_1,x_b\rangle}{(1-\langle x_1,x_a\rangle^2)^{1/2}(1-\langle x_1,x_b\rangle^2)^{1/2}}= \frac{\langle p_a,p_b\rangle-\langle p_1,p_a\rangle\langle p_1,p_b\rangle}{(1-\langle p_1,p_a\rangle^2)^{1/2}(1-\langle p_1,p_b\rangle^2)^{1/2}}+O(\delta c)=\langle p_a',p_b'\rangle+O(\delta c).\]
	Let $\epsilon>0$ be a small constant to be specified later.
	Applying the induction hypothesis for $r'=r-1$ and points $p_2',\dots,p_{r+1}'$,
	we have that
	\begin{equation}\label{eq_probcomplete1}
	\PP[|\langle x_a',x_{r+1}'\rangle-\langle p_a',p_{r+1}'\rangle|<\epsilon c\textnormal{ for all $2\leq a\leq r$}]\geq \alpha_0^{d-1}(\epsilon c)^{(d-1)/2+h_0}
	\end{equation}
	whenever $\delta<\delta_0\epsilon$, for some constants $\alpha_0, \delta_0>0$ and $h_0$ depending on $p_1,\dots,p_{r+1}$ only.
	
	By Lemma \ref{lemma_spherebounds}, there is a constant $\beta$ depending on $p_1,p_{r+1}$ only such that
	\begin{equation}\label{eq_probinY1}
	\PP[|\langle x_1,x_{r+1}\rangle-\langle p_1,p_{r+1}\rangle|<\epsilon c]\geq \beta^d\epsilon c.
	\end{equation}
	
	Note that the events in \eqref{eq_probcomplete1} and \eqref{eq_probinY1} are independent, since conditioning on the second event we still have an independent uniform distribution for the vector $x_{r+1}'$. It follows that 
	\begin{equation}\label{eq_probboth1}
	\PP[|\langle x_a',x_{r+1}'\rangle-\langle p_a',p_{r+1}'\rangle|<\epsilon c\textnormal{ for all $2\leq a\leq r$ and } |\langle x_1,x_{r+1}\rangle-\langle p_1,p_{r+1}\rangle|<\epsilon c]\geq \gamma^d(\epsilon c)^{d/2+h_1}
	\end{equation}
	whenever $\delta<\delta_0\epsilon$, for some constants $\gamma>0$ 
	and $h_1$ (with the constants depending on $p_1,\dots,p_{r+1}$ only). 
	
	So it suffices to show that if $\epsilon$ and $\delta$ are sufficiently small (depending on $p_1,\dots,p_{r+1}$ only), then the event above implies that $|\langle x_a,x_{r+1}\rangle-\langle p_a,p_{r+1}\rangle|<c$ for all $2\leq a\leq r$. But we have
	\begin{align*}\langle x_a,x_{r+1}\rangle&=\langle x_a',x_{r+1}'\rangle (1-\langle x_1,x_{a}\rangle^2)^{1/2}(1-\langle x_1,x_{r+1}\rangle^2)^{1/2} +\langle x_1,x_{a}\rangle \langle x_1,x_{r+1}\rangle\\
	&=(\langle p_a',p_{r+1}'\rangle+O(\epsilon c)) ((1-\langle p_1,p_{a}\rangle^2)^{1/2}+O(\delta c))((1-\langle p_1,p_{r+1}\rangle^2)^{1/2}+O(\epsilon c))\\
	&\hspace{0.5cm} +(\langle p_1,p_{a}\rangle+O(\delta c)) (\langle p_1,p_{r+1}\rangle+O(\epsilon c))\\
	&=\langle p_a,p_{r+1}\rangle +O((\epsilon+\delta)c),
	\end{align*}
	which gives the result.
\end{proof}

\begin{lemma}\label{lemma_indeppoints}
	Let $r$ be a positive integer and let $p_1,\dots,p_r$ be linearly independent points in $S^{r-1}$. Then there are exist real numbers $\alpha_1>0$ and $h_1$ such that whenever $d\geq r$ is a positive integer and $0<c<1$ then the probability that $r$ points $x_1,\dots,x_r$ chosen independently and uniformly at random on $S^d$ satisfy $|\langle x_i,x_j\rangle -\langle p_i, p_j\rangle |< c$ for all $1\leq i,j\leq r$ is at least $\alpha_1^dc^{h_1}$.
\end{lemma}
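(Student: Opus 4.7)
The plan is to induct on $r$, paralleling the inductive step of Lemma~\ref{lemma_r+1points} but placing one fewer constraint at each stage. The base case $r=1$ is immediate since the only condition $\langle x_1,x_1\rangle=1=\langle p_1,p_1\rangle$ holds automatically, so $\alpha_1=1,\,h_1=0$ works.

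For the inductive step, after a rotation of $\RR^r$ I may assume $p_1=(0,\dots,0,1)\in S^{r-1}$, and by rotation-invariance of the uniform distribution on $S^d$ I may condition on $x_1=(0,\dots,0,1)\in S^d$. For $2\leq i\leq r$ write $p_i=(\tilde p_i,\,p_i^{(r)})$ and $x_i=(\tilde x_i,\,x_i^{(d+1)})$, and define $p_i'=\tilde p_i/\Vert\tilde p_i\Vert\in S^{r-2}$ and $x_i'=\tilde x_i/\Vert\tilde x_i\Vert\in S^{d-1}$, which are well-defined almost surely; $\Vert\tilde p_i\Vert>0$ because linear independence of $p_1,\dots,p_r$ rules out $p_i=\pm p_1$. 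A short linear algebra check shows that $p_2',\dots,p_r'$ are linearly independent in $\RR^{r-1}$, so the inductive hypothesis applies to them on $S^{r-2}$.

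I would then sample in two stages. First, writing $t_i:=x_i^{(d+1)}=\langle x_1,x_i\rangle$, the second part of Lemma~\ref{lemma_spherebounds} (applied with $\xi=p_i^{(r)}\in(-1,1)$) shows that $|t_i-p_i^{(r)}|<\epsilon c$ holds with probability at least $\beta^d\epsilon c$ for some $\beta>0$; by independence over $i\geq 2$ the joint probability is at least $\beta^{d(r-1)}(\epsilon c)^{r-1}$. Conditional on the $t_i$, the normalized vectors $x_i'$ are independent and uniform on $S^{d-1}$, so the inductive hypothesis applied to $p_2',\dots,p_r'$ gives probability at least $\alpha_1^{d-1}(\epsilon c)^{h_1}$ that $|\langle x_i',x_j'\rangle-\langle p_i',p_j'\rangle|<\epsilon c$ for all $2\leq i,j\leq r$, and this event is independent of the values of the $t_i$.

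Finally, using the identity $\langle x_i,x_j\rangle=\Vert\tilde x_i\Vert\,\Vert\tilde x_j\Vert\,\langle x_i',x_j'\rangle + t_it_j$ and the analogous identity for the $p$'s, a routine calculation (of the same flavour as the one at the end of the proof of Lemma~\ref{lemma_r+1points}) shows that both events together imply $|\langle x_i,x_j\rangle-\langle p_i,p_j\rangle|<c$ for all $i,j$ (including $i=1$, which reduces to $|t_j-p_j^{(r)}|<\epsilon c$) provided $\epsilon$ is a sufficiently small positive constant depending only on $p_1,\dots,p_r$. Multiplying the two lower bounds gives a joint probability at least $\alpha_1^d c^{h_1+(r-1)}$ for updated constants, completing the induction; iterating from the base case yields $h_1=\binom{r}{2}$. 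The only mild obstacle is the bookkeeping needed to ensure that all the constants absorbed into $\alpha_1$ are independent of $d$, which is straightforward because the dimensional losses in Lemma~\ref{lemma_spherebounds} and the inductive hypothesis have already been exposed explicitly.
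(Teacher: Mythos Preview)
Your proposal is correct and follows essentially the same route as the paper's proof: induct on $r$, rotate so that $p_1$ and $x_1$ are the north poles, project the remaining points to $S^{r-2}$ and $S^{d-1}$, use the inductive hypothesis on the projected points together with Lemma~\ref{lemma_spherebounds} for the heights $\langle x_1,x_a\rangle$, and combine by independence before unwinding via the identity $\langle x_i,x_j\rangle=\Vert\tilde x_i\Vert\,\Vert\tilde x_j\Vert\,\langle x_i',x_j'\rangle+t_it_j$. The paper presents the same argument almost verbatim; your additional remark that the recursion gives $h_1=\binom{r}{2}$ is a pleasant extra that the paper does not state explicitly.
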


\begin{proof}
	The proof is essentially the same as for the previous lemma. We prove the statement by induction on $r$. The case $r=1$ is trivial. Now assume that $r\geq 2$ and that the statement holds for smaller values of $r$.  By symmetry, we may assume that $x_1=(0,0,\dots,0,1)\in S^d$ and $p_1=(0,0,\dots,0,1)\in S^{r-1}$. Define $p_a'$ and $x_a'$ for $a\geq 2$ as in the proof of Lemma \ref{lemma_r+1points}. Let $\epsilon>0$ be some small constant to be determined later.
	
	By induction, we have
	\begin{equation*}\label{eq_probcomplete2}
	\PP[|\langle x_a',x_{b}'\rangle-\langle p_a',p_{b}'\rangle|<\epsilon c\textnormal{ for all $2\leq a,b\leq r$}]\geq \alpha_1^{d-1}(\epsilon c)^{h_1}
	\end{equation*}
	for some $\alpha_1>0$ and $h_1$ (where the constants depend only on $p_1,\dots,p_{r+1}$).
	
	By Lemma \ref{lemma_spherebounds}, there are constants $\beta_2,\dots,\beta_r$ 
	depending on $p_1,\dots,p_{r}$ only such that for each $2\leq a\leq r$
	\begin{equation*}\label{eq_probinY2}
	\PP[|\langle x_1,x_{a}\rangle-\langle p_1,p_{a}\rangle|<\epsilon c]\geq \beta_a^d\epsilon c.
	\end{equation*}
	
	By independence,
	\[	\PP[|\langle x_a',x_{b}'\rangle-\langle p_a',p_b'\rangle|<\epsilon c\textnormal{ for all $2\leq a,b\leq r$ and } |\langle x_1,x_{a}\rangle-\langle p_1,p_{a}\rangle|<\epsilon c\textnormal{ for all $a$}]\geq \gamma^d(\epsilon c)^{h_2}\]
	for some real numbers $\gamma>0$ 
	and $h_2$.
	
	However, if the event above holds then
	\begin{align*}\langle x_a,x_b\rangle&=\langle x_a',x_b'\rangle (1-\langle x_1,x_{a}\rangle^2)^{1/2}(1-\langle x_1,x_b\rangle^2)^{1/2} +\langle x_1,x_{a}\rangle \langle x_1,x_b\rangle\\
	&=\langle p_a',p_b'\rangle (1-\langle p_1,p_{a}\rangle^2)^{1/2}(1-\langle p_1,p_b\rangle^2)^{1/2} +\langle p_1,p_{a}\rangle \langle p_1,p_b\rangle+O(\epsilon c)\\
	&=\langle p_a,p_b\rangle +O(\epsilon c).
	\end{align*}
	The result follows by taking a sufficiently small $\epsilon$.
\end{proof}

\begin{proof}[Proof of Lemma \ref{lemma_probabilityKs}]
	By Lemma \ref{lemma_r+1points}, we can choose constants $0<\delta<1$, $\alpha_0>0$ and $h_0$ such that whenever $d\geq r$ is a positive integer, $0<c<1$ and $x_1,\dots,x_r$ are points on $S^d$ with $|\langle x_i,x_j\rangle -\langle p_i, p_j\rangle |<\delta c$ for all $1\leq i,j\leq r$, then for all $a>r$ the probability that a random point $x_a$ on $S^d$ satisfies $|\langle x_i,x_{a}\rangle -\langle p_i, p_{a}\rangle|<c$ for all $i\leq r$ is at least $\alpha_0^dc^{d/2+h_0}$.
	
	Now let $\epsilon$ be a small constant to be specified later. Using Lemma \ref{lemma_indeppoints}, the observation above and independence of $x_{r+1},\dots,x_s$ conditional on $x_1,\dots,x_r$, we have that
	\begin{align*}\PP[ |\langle x_i,x_j\rangle-\langle p_i,p_j\rangle|< \delta\epsilon c\textnormal{ whenever $i,j\leq r$ and }&|\langle x_i,x_a\rangle-\langle p_i,p_a\rangle|< \epsilon c\textnormal{ whenever $i\leq r<a\leq s$}]\\
	&\geq \alpha_1^d(\delta\epsilon c)^{h_1}\alpha_0^{(s-r)d}(\epsilon c)^{(s-r)(d/2+h_0)}\\
	&\geq \alpha^d(\epsilon c)^{d(s-r)/2+h}
	\end{align*}
	for some constants $\alpha>0$ and $h$. We show that the event above implies that $|\langle x_a,x_b\rangle-\langle p_a,p_b\rangle|<c$ even if $a,b>r$ (if $\epsilon$ is sufficiently small).
	Given $b>r$, we can find coefficients $\lambda_{b,a}$ such that $p_b=\sum_{a=1}^{r}\lambda_{b,a}p_a$. Write $y_b=\sum_{a=1}^{r}\lambda_{b,a}x_a$. Then
	\begin{align*}
	\Vert y_b-x_b\Vert^2&=\left\langle \sum_{a=1}^{r}\lambda_{b,a}x_a-x_b,\sum_{a=1}^{r}\lambda_{b,a}x_a-x_b\right\rangle\\
	&=\left\langle \sum_{a=1}^{r}\lambda_{b,a}p_a-p_b,\sum_{a=1}^{r}\lambda_{b,a}p_a-p_b\right\rangle+O(\epsilon c)\\
	&=O(\epsilon c).
	\end{align*}
	Thus $\Vert y_b-x_b\Vert=O\left((\epsilon c)^{1/2}\right)$. Furthermore, we have, for each $1\leq i\leq r$,
	\begin{align*}
	\langle x_i,y_b-x_b\rangle&= \left\langle x_i,\sum_{a=1}^{r}\lambda_{b,a}x_a-x_b\right\rangle\\
	&=\left\langle p_i,\sum_{a=1}^{r}\lambda_{b,a}p_a-p_b\right\rangle+O(\epsilon c)\\
	&=O(\epsilon c).
	\end{align*}
	It follows that whenever $b,b'>r$ then
	\begin{align*}
	\langle x_b,x_{b'}\rangle &= \langle y_b+(x_b-y_b),y_{b'}+(x_{b'}-y_{b'})\rangle\\
	&=\langle y_b, y_{b'}\rangle +\langle x_b-y_b,y_{b'}\rangle +\langle y_b,x_{b'}-y_{b'}\rangle +O(\epsilon c)\\
	&=\left\langle\sum_{a=1}^{r}\lambda_{b,a}x_a,\sum_{a=1}^{r}\lambda_{b',a}x_a \right\rangle+
	\left\langle x_b-y_b,\sum_{a=1}^{r}\lambda_{b',a}x_a \right\rangle+
	\left\langle\sum_{a=1}^{r}\lambda_{b,a}x_a,x_{b'}-y_{b'} \right\rangle+O(\epsilon c)\\
	&=\left\langle\sum_{a=1}^{r}\lambda_{b,a}p_a,\sum_{a=1}^{r}\lambda_{b',a}p_a \right\rangle+O(\epsilon c)\\
	&=\langle p_b,p_{b'}\rangle+O(\epsilon c).
	\end{align*}
	Choosing a sufficiently small $\epsilon>0$ gives the result.
\end{proof}

\end{document}